\begin{document}

\begin{abstract}
    Let $\bfv_1,\ldots,\bfv_m$ be points in a metric space with distance $d$, and let $w_1,\ldots,w_m$ be positive real weights. The weighted Fermat-Weber points are those points $\bfx$ which minimize $\sum w_i d(\bfv_i, \bfx)$. 
    We extend a result of Com\u{a}neci and Joswig, that the set of unweighted Fermat-Weber points agrees with the ``central" covector cell of the tropical convex hull of $\bfv_1,\ldots,\bfv_m$, to the weighted setting.
    In particular, we show that for any fixed data points $\bfv_1, \ldots, \bfv_m$, and any covector cell of the tropical convex hull of the data, there is a choice of weights that makes that cell the Fermat-Weber set.
    We similarly extend the method of Com\u{a}neci and Joswig for computing consensus trees in phylogenetics.
\end{abstract}

\maketitle

\section{Introduction}

Given data points $\bfv_1, \ldots, \bfv_m$ in a metric space $X$ with distance $d$, a point $\bfx \in X$ that minimizes the average distance to the data is called a \textit{Fermat-Weber point} of $\bfv_1, \ldots, \bfv_m$. 
In general, Fermat-Weber points are not unique.
Thus, we study the set of all Fermat-Weber points of $\bfv_1, \ldots, \bfv_m$, which is called the \textit{Fermat-Weber set} of $\bfv_1, \ldots, \bfv_m$.
In \cite{joswig-com-tropical-medians}, Comǎneci and Joswig study the Fermat-Weber set for data in the \textit{tropical projective torus}, $\TP$, with the \textit{asymmetric tropical metric} $d_\Delta$.
They prove that Fermat-Weber set with respect to $d_\Delta$ is contained in the tropical convex hull of the data, $\mathrm{tconv}\{ \bfv_1,\ldots,\bfv_m \}$.
In contrast, the Fermat-Weber set with respect to the usual (symmetric) tropical metric is typically not contained in $\mathrm{tconv}\{ \bfv_1,\ldots,\bfv_m \}$ \cite{lin2017convexity,lin-yoshida-tropical-fw}.

In this paper, we study the \textit{weighted} Fermat-Weber set for data in $\TP$ with the asymmetric tropical metric $d_\Delta$.
In the weighted variant of the Fermat-Weber problem, the input also includes a positive real weight, $w_i$, for each data point $\bfv_i$. 
Then, the \textit{weighted Fermat-Weber set} is the collection of all points $\bfx \in X$ which minimize the sum $\sum_{i=1}^m w_i d(\bfx, \bfv_i)$.
Our main theorem, stated below, generalizes \cite[Theorem 4]{joswig-com-tropical-medians} to the weighted setting.

\begin{theorem}
    \label{thm:main}
    Given data $\bfv_1, \ldots, \bfv_m \in \TP$ and any real positive weights $w_1, \ldots, w_m$, the weighted Fermat-Weber set with respect to $d_\Delta$ is contained in $\tconv\{ \bfv_1,\ldots,\bfv_m \}$. More precisely, it is a covector cell of $\tconv\{ \bfv_1,\ldots,\bfv_m \}$. Furthermore, for every covector cell of $\tconv\{ \bfv_1,\ldots,\bfv_m \}$ there exist positive real weights $w_1, \ldots, w_m$ so that the weighted Fermat-Weber set is that cell.
\end{theorem}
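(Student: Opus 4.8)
The plan is to reduce the ``furthermore'' to a transportation-feasibility statement and then engineer the weights by choosing a flow first and reading off the weights as its row sums. Throughout, fix a covector cell $C$ of $\tconv\{\bfv_1,\ldots,\bfv_m\}$ and let $G\subseteq[m]\times[n]$ be its covector, viewed as a bipartite graph in which $(i,j)$ is an edge exactly when $i\in S_j$ on the relative interior of $C$. Since $C$ lies in the tropical hull, every coordinate $j$ satisfies $S_j\neq\emptyset$, and since each data point attains the defining extremum in at least one coordinate, every $i$ lies in some $S_j$; hence $G$ has no isolated vertices on either side. This last observation is what ultimately lets me keep all weights strictly positive.

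First I would record the optimality condition. Because $d_\Delta(\bfv_i,\cdot)$ is convex and piecewise linear, with gradient on the interior of each linearity region equal, modulo the all-ones lineality direction, to a positive multiple of the standard basis vector $\mathbf e_j$ indexed by the coordinate $j$ attaining the extremum that defines the covector, the objective $f(\bfx)=\sum_i w_i\,d_\Delta(\bfv_i,\bfx)$ is affine on each covector cell, and at a point $\bfx$ in $\mathrm{relint}(C)$ its subdifferential is generated by the vectors $n\mathbf e_j-\mathbf 1$ for $(i,j)\in G$, weighted by the $w_i$. Working modulo $\mathbf 1$, the condition $0\in\partial f(\bfx)$ becomes: there is a nonnegative array $(f_{ij})$ supported on $G$ with row sums $\sum_{j:(i,j)\in G}f_{ij}=w_i$ and all $n$ column sums equal to $\tfrac1n\sum_i w_i$. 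In other words, $\mathrm{relint}(C)$ consists of minimizers precisely when the transportation problem on $G$ with supplies $w_i$ and uniform demands is feasible, and this condition is independent of the chosen $\bfx\in\mathrm{relint}(C)$, depending only on $G$ and the weights. This is the same flow characterization that underlies the first two assertions of the theorem, which I would establish before this step.

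Given this, the construction is clean. Let $P$ be the transportation polytope of all nonnegative arrays $(f_{ij})$ supported on $G$ whose $n$ column sums are all equal. Since each coordinate has a neighbour in $G$, the polytope $P$ is nonempty; and since $G$ has no isolated vertices, every edge of $G$ can carry positive flow in some element of $P$, so any array in the relative interior of $P$ is strictly positive on all of $G$. Fix such an interior array $(f_{ij})$ and set $w_i:=\sum_{j:(i,j)\in G}f_{ij}$. Each $w_i$ is a sum of strictly positive terms over the nonempty edge set at $i$, so $w_i>0$; and by construction all column sums equal $\tfrac1n\sum_i w_i$, so the transportation problem on $G$ is feasible and $\mathrm{relint}(C)$ lies in the weighted Fermat-Weber set. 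Since that set is a single covector cell by the second assertion of the theorem, and it contains points whose covector is exactly $G$, it must equal $C$.

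The step I expect to be the main obstacle is the positivity of the weights: guaranteeing that a demand-balancing flow can be taken with every row sum strictly positive while meeting the uniform column sums exactly. This is precisely where the ``no isolated vertices'' property of covectors of $\tconv$ is used, together with the standard fact that a point in the relative interior of $P$ is supported on every edge active in some feasible flow. A careful write-up must also confirm that reducing to ``$\mathrm{relint}(C)$ lies in the argmin'' genuinely forces equality with $C$, which is where I invoke the single-cell statement rather than checking non-optimality of the complement point by point.
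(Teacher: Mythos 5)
Your reduction of the ``furthermore'' part to transportation feasibility is sound, and in substance your weight construction is the same as the paper's: the subdifferential computation is correct (a point with covector graph $G$ is a minimizer if and only if there is a nonnegative flow supported on $G$ with row sums $w_i$ and all column sums equal to $\frac1n\sum_i w_i$), the spanning property of covector graphs of bounded cells is correct, and reading off the $w_i$ as row sums of a strictly positive flow is exactly what the paper does explicitly, via $\lambda(e)=\frac{1}{n\cdot\deg r(e)}$ on a spanning forest refining the cell. The genuine gap is your last inference. From ``$\relint(C)$ consists of minimizers'' together with ``the Fermat--Weber set is a single covector cell'' you conclude that the Fermat--Weber set equals $C$; but those two facts only force $C$ to be a \emph{face} of the Fermat--Weber cell. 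A cell $D$ having $C$ as a proper face also contains $\relint(C)$, and its points in $\relint(C)$ do have covector exactly $G$; nothing you have written excludes this $D$, whose relative interior carries a strictly smaller covector $H\subsetneq G$ and whose own optimality certificate would be a flow supported on $H$ --- a condition not obviously incompatible with feasibility on $G$. In the paper this exactness comes for free from duality: the relative interiors of the cells of the dual (Cayley) subdivision partition the polytope, so the cell whose relative interior contains $(\bfw,\frac1n\1_n)$ is unique, and the Fermat--Weber set is its dual cell, namely $C$. Your argument has no substitute for that uniqueness.

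The fix is available with exactly the object you built, via complementary slackness. The Fermat--Weber problem is the linear program minimizing $\sum_i w_i t_i$ subject to $t_i\geq x_j-v_{ij}$ and $\bfx\in\HH$; its dual is the transportation problem over flows with row sums $w_i$ and equal column sums. Your strictly positive flow $f^\ast$ on $G$ is dual feasible, and pairing it with any $\bfx\in\relint(C)$ verifies complementary slackness, so both are optimal. Now pair $f^\ast$ with an \emph{arbitrary} minimizer $\bfy$: complementary slackness forces $y_j-v_{ij}=\max_{j'}(y_{j'}-v_{ij'})$ for every $(i,j)$ with $f^\ast_{ij}>0$, i.e.\ for every edge of $G$, so the covector of $\bfy$ contains $G$, which says precisely $\bfy\in C$. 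This gives the reverse inclusion $\FW(V,\bfw)\subseteq C$ directly; note it is strict positivity of $f^\ast$ on \emph{all} of $G$ doing the work here, not merely positivity of the $w_i$, and once this is written you no longer need to quote the single-cell assertion at all. Finally, be aware that your proposal presupposes the first two assertions of the theorem; in the paper these require their own argument (Newton polytope, central cell, and the duality lemma), so the phrase ``which I would establish before this step'' is carrying real weight and would need to be fleshed out for a complete proof.
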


In addition to the tropical convexity properties of these Fermat-Weber sets, applications to phylogenetics further motivate the study of the weighted Fermat-Weber problem in the asymmetric tropical setting.
Phylogenetics is a subfield of computational biology concerned with determining evolutionary relationships among present-day species.
One central problem in phylogenetics, called \textit{phylogenetic (tree) reconstruction}, is the following:
given biological data, e.g.\, DNA alignments, from $n$ taxa, output a tree on $n$ leaves that represents the evolutionary history of the $n$ taxa.
There are many methods for phylogenetic reconstruction \cite{de2014phylogenetic}, and one fundamental problem is that different methods may return different trees, even on the same input data.
A \textit{consensus method} is an algorithm that attempts to determine which features in the various reconstructed trees are likely to be present in the true tree.
The output is a \textit{consensus tree} with those features.
For a survey of consensus methods, see \cite{bryant}.

The \textit{space of equidistant phylogenetic trees} on $N$ leaves is a tropically convex space under a certain embedding \cite{ardila2006bergman, ITG}.
Thus, \cite[Theorem 4]{joswig-com-tropical-medians} implies that the (weighted) Fermat-Weber points of tree data with respect to $d_\Delta$ also represent trees; our \Cref{thm:main} extends this property to the weighted case.
In \cite{joswig-com-tropical-medians}, Comǎneci and Joswig then define a consensus method using Fermat-Weber points with respect to $d_\Delta$.
They prove this consensus method is well-behaved in the sense that (1) it is regular (as defined in \cite{bryant2017can}), and (2) it is Pareto and co-Pareto on rooted triples.
We analogously define a consensus method using weighted Fermat-Weber points (see \Cref{defn:consensus}), which is regular, and both Pareto and co-Pareto on rooted triples.

Moreover, the extension to the \textit{weighted} setting is important in certain phylogenetic settings where weighted tree data appears.
For example, in bootstrapping weights are on each tree are used to improve robustness \cite{makarenkov2010weighted}.
This is a natural setting where \textit{weighted} Fermat-Weber points could be applied in future research.

The paper is organized as follows. In \cref{sec:background} we state our notation and build connections among the objects from tropical and polyhedral geometry relevant to our approach. In \cref{sec:main}, we prove our main result, \Cref{thm:main}, which is split into two parts: \Cref{thm:containment} and \Cref{thm:main-2}. Then in \cref{sec:conclusion}, we describe applications of weighted asymmetric tropical Fermat-Weber points to phylogenetics. We end by briefly outlining directions for future research.

\section{Preliminaries}
\label{sec:background}

In this section, our definitions from tropical geometry our sourced from \cite{ETC}; for polyhedral geometry our primary reference is \cite{triangulations-textbook}.

\subsection{The Weighted Asymmetric Tropical Fermat-Weber Problem}

The purpose of this subsection is to introduce the \textit{weighted asymmetric tropical Fermat-Weber problem}, and to formulate the weighted asymmetric tropical Fermat-Weber points as the minimizers of a \textit{tropical signomial} (see \Cref{prop:trop-sig}).

We will work in the tropical max-plus semi-ring $\mathbb{T}_\mathrm{max} = (\R \cup \{-\infty\}, \oplus, \odot)$, or simply $\mathbb{T}$ for brevity, where tropical addition $\oplus$ and tropical multiplication $\odot$ are defined by
\[
a \oplus b = \max\{a,b\} \quad \text{ and } \quad a \odot b = a + b, \quad \text{ for all} \ a,b \in \R \cup \{ -\infty \}.
\]
The tropical additive identity is $-\infty$ and the multiplicative identity is $0$. Note that the above operations extend to the semi-module $\mathbb{T}^n$ via coordinate-wise tropical addition and tropical scalar multiplication.

We consider data in the \textit{tropical projective torus}, which is the space $\TP$, where $\1_n$ denotes the vector of all ones in $\R^n$.
We consider $\TP$ with the following asymmetric tropical distance, which was first defined in \cite{joswig-com-tropical-medians}; see also \cite{allamigeon2018log}.

\begin{defn}[Comǎneci and Joswig {\cite[Section 3]{joswig-com-tropical-medians}}]
    \label{defn:asym-dist}
    The \textit{asymmetric tropical distance}, $d_\Delta(\bfx, \bfy)$ on $\TP$ is:
    \begin{equation}\label{eqn:asym-dist-defn}
        d_\Delta(\bfx,\bfy) := n \max_{i \in [n]} (x_i - y_i) + \sum_{i \in [n]} (y_i - x_i).
    \end{equation}
    It is possible to simplify the expression for $d_\Delta(\bfx, \bfy)$ by assuming that $\bfx$ and $\bfy$ are given by a specific representative in $\R^n$.
    Let $\HH$ be the subspace where the coordinates sum to zero:
    \[ \HH := \{ \bfx \in \R^n \mid x_1 + \cdots + x_n = 0 \}. \]
    Each point in the tropical projective torus has a unique representative in $\HH$, which is obtained via orthogonal projection,
    e.g.\, $(3, 1, 2) = 2 \1_3 + (1, -1, 0) \equiv (1, -1, 0)$.
    When the points $\bfx, \bfy \in \TP$ are given by their unique representative in $\HH$, the distance $d_\Delta(\bfx,\bfy)$ can be simplified to the following
\begin{equation}\label{eqn:asym-dist-H0-defn}
    d_\Delta(\bfx,\bfy) := n \max_{i \in [n]} (x_i - y_i), \phantom. \bfx, \bfy \in \HH.
\end{equation}
\end{defn}

From now on, we fix a data set $V = \{ \bfv_1, \ldots, \bfv_m \} \subset \HH$ and view this as a data set in $\TP$ where each point is given by its representative in $\HH$. 
Moreover, we allow the data points to be weighted by positive real weights $w_1, \ldots, w_m$.
We denote this list of weights by the vector $\bfw$.
In some instances, it will be essential that $\bfw$ is a \textit{weight vector}, meaning that in addition to assuming $w_i > 0$ for all $i$, we also assume that $\sum_{i=1}^m w_i = 1$.
In fact, we may always assume that $\bfw$ is a weight vector, since the minimizers of the sum in (\ref{eqn:fw-sum}) are unchanged by a uniform rescaling of the weights.

\begin{defn}\label{defn:FW}
    Given a data set $V = \{ \bfv_1, \ldots, \bfv_m \} \subset \HH$ and a weight vector $\bfw \in \R^m_{>0}$, the \textit{weighted (asymmetric tropical) Fermat-Weber set} on the data $V$ with weights $\bfw$ consists of the points $\bfx \in X$ minimizing the following sum
    \begin{equation}\label{eqn:fw-sum}
        \mbox{FW}(V, \bfw) := \underset{\bfx \in X}{\mathrm{argmin}} \frac1m \sum_{i=1}^m w_i d_\Delta(\bfx, \bfv_i).
    \end{equation}
    A \textit{weighted (asymmetric tropical) Fermat-Weber point} of $V$ with weights $\bfw$ is any point in the Fermat-Weber set $\mbox{FW}(V, \bfw)$.
\end{defn}

Note that \Cref{defn:FW} fixes a choice of signs by choosing the order of the arguments for $d_\Delta$. In particular, it fixes the tropical convex hull to be the min-tropical convex hull.

The next step is to rewrite the sum in (\ref{eqn:fw-sum}) using the operations of the tropical max-plus semi-ring.
For this, we require the following notation.

The \textit{tropical monomial} $\bfx^{\odot \bfa}$ is the sum $\sum_{i=1}^n a_i x_i$. A \textit{tropical signomial} is a finite tropical linear combination of tropical monomials with non-negative real exponents, i.e.\
$$f(\bfx) = \bigoplus_{\bfa \in A} \lambda_\bfa \odot \bfx^{\odot \bfa},$$
for a finite subset $A \subset \R^n_{\geq 0}$ is finite and $\lambda_\bfa \in \T$ for all $\bfa \in A$. If $A \subset \Z^n_{\geq 0}$, then $f(\bfx)$ is a \textit{tropical polynomial}.

Below, we reinterpret the distance $d_\Delta(\bfx, \bfv_i)$ as a tropical polynomial. The sum in (\ref{eqn:fw-sum}) is then a tropical signomial, which the Fermat-Weber points minimize.

\begin{defn}
    \label{defn:f-V-w}
    The \textit{tropical linear form centered at} $\bfv_i$ is defined to be
    \begin{equation*}
        f_{\bfv_i}(\bfx) := \bigoplus_{j=1}^n (-v_{ij}) \odot x_j,
    \end{equation*}
    and the \textit{tropical signomial associated to data $V$ with weights $\bfw$} is 
    \[ f_{V, \bfw}(\bfx) := \bigoplus_{i=1}^m f_{\bfv_i}^{\odot w_i}(\bfx). \] 
\end{defn}

\begin{prop}
    \label{prop:trop-sig}
    The weighted Fermat-Weber points of $V$ with respect to the tropical asymmetric metric $d_\Delta$ are the minimizers of $f_{V, \bfw}$ over $\HH$.
\end{prop}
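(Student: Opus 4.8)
The plan is to show that the Fermat-Weber objective in \eqref{eqn:fw-sum} and the tropical signomial $f_{V,\bfw}$ differ over $\HH$ only by a positive multiplicative constant, so that their two sets of minimizers coincide. Since we have fixed $V \subset \HH$ and identify $\TP \cong \HH$ via the unique representative in $\HH$, I would first rewrite each summand of the objective as a scalar multiple of the tropical linear form $f_{\bfv_i}$. For $\bfx \in \HH$ the simplified distance formula \eqref{eqn:asym-dist-H0-defn} applies, giving $d_\Delta(\bfx,\bfv_i) = n\max_{j\in[n]}(x_j - v_{ij})$, while by \Cref{defn:f-V-w} we have $f_{\bfv_i}(\bfx) = \bigoplus_{j}(-v_{ij})\odot x_j = \max_j(x_j - v_{ij})$. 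Hence $d_\Delta(\bfx,\bfv_i) = n\,f_{\bfv_i}(\bfx)$, and substituting into \eqref{eqn:fw-sum} yields
\[ \frac1m \sum_{i=1}^m w_i\, d_\Delta(\bfx,\bfv_i) = \frac{n}{m}\sum_{i=1}^m w_i\, f_{\bfv_i}(\bfx). \]

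Next I would unwind the tropical operations defining $f_{V,\bfw}$ to identify it, as a function on $\HH$, with the classical sum $\sum_i w_i f_{\bfv_i}$. The real tropical power acts as ordinary scaling, $f_{\bfv_i}^{\odot w_i}(\bfx) = w_i\, f_{\bfv_i}(\bfx)$, and combining the scaled forms produces $\sum_i w_i\, f_{\bfv_i}(\bfx)$. The step that deserves the most care is verifying that the distributive expansion of this combination into a genuine tropical signomial (a tropical sum of monomials with real exponents) still evaluates to $\sum_i w_i f_{\bfv_i}$; concretely, this is the identity
\[ \sum_{i=1}^m w_i \max_{j\in[n]}(x_j - v_{ij}) = \max_{(j_1,\ldots,j_m)\in[n]^m}\ \sum_{i=1}^m w_i\,(x_{j_i} - v_{i j_i}), \]
which holds because each summand $\max_j(x_j - v_{ij})$ is attained by a maximizing index $j_i$ that may be chosen independently for each $i$. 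This interchange of $\max$ and $\sum$ is exactly what lets a sum of maxima be written as a single tropical signomial, and it is where I expect the one genuine subtlety to lie; everything else is bookkeeping about how $\odot$-powers and $\odot$-products translate into classical scaling and addition.

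Finally, combining the two computations gives $\frac1m\sum_i w_i\, d_\Delta(\bfx,\bfv_i) = \frac{n}{m}\,f_{V,\bfw}(\bfx)$ for every $\bfx\in\HH$. Because $\tfrac{n}{m}$ is a positive constant independent of $\bfx$, a point minimizes the left-hand side over $\HH$ if and only if it minimizes $f_{V,\bfw}$ over $\HH$; therefore $\mathrm{FW}(V,\bfw)$ equals the minimizer set of $f_{V,\bfw}$, as claimed. I would remark that the set equality follows from the scalar relation alone and needs no a priori knowledge that the infimum is attained (though it is: the objective is a bounded-below piecewise-linear convex function on $\HH$, and nonemptiness of the minimizer set is confirmed by its later identification with a covector cell in \Cref{thm:containment}).
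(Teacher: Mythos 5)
Your proposal is correct and follows essentially the same route as the paper's proof: both identify $d_\Delta(\bfx,\bfv_i) = n\,f_{\bfv_i}(\bfx)$ on $\HH$ and observe that the Fermat--Weber objective equals $\tfrac{n}{m}\,f_{V,\bfw}$, so the minimizer sets coincide. Your explicit verification of the $\max$/$\sum$ interchange (justifying the expansion into a tropical signomial) is a detail the paper leaves implicit, but it is the same argument, not a different one.
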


\begin{proof}
The result follows from writing the classical sum in (\ref{eqn:fw-sum}) using the tropical operation $\odot = +$, and replacing $d_\Delta(\bfx, \bfv_i)$ with $f_{\bfv_i}$.
Indeed, the distance from $\bfx \in \HH$ to $\bfv_i \in \HH$ is (up to scaling by $1/n$) given by the tropical polynomial $f_{\bfv_i}(\bfx)$

\begin{equation*}
    \label{eqn:dist-to-vi}
    f_{\bfv_i}(\bfx) = \bigoplus_{j=1}^n (-v_{ij}) \odot x_j = \max_{j \in [n]} (x_j - v_{ij}) = \frac1n d_\Delta (\bfx, \bfv_i).
\end{equation*}

\noindent Thus, the average distance from $\bfx \in \HH$ to the data $V \subseteq \HH$ is (up to scaling by $m/n$) given by the following tropical signomial

\begin{equation*}
    \label{eqn:f-fw}
    f_{V,\bfw}(\bfx) = \bigodot_{i=1}^m f_{\bfv_i}^{\odot w_i} = 
    \sum_{i = 1}^m w_i \max_{j \in [n]}(x_j - v_{ij}) = \frac{1}{n} \sum_{i = 1}^m w_i d_\Delta(\bfx, \bfv_i).
\end{equation*}
It follows that the set of minimizers of $f_{V, \bfw}(\bfx)$ over $\bfx \in \HH$ is exactly $\FW(V, \bfw)$.
\end{proof}

We end the subsection with an example of $f_{V, \bfw}$ for two data points, which we will return to throughout the paper.

\begin{example}
    \label{ex:running-ex}
    Consider the data set $\{ \bfv_1 = (0,0,0), \bfv_2 = (1,-1,0) \} = V \subset \R^3/\R\1_3$.
    For the weights $\bfw = (\frac{1}{3},\frac{2}{3})$, $f_{V,\bfw}(\bfx)$ is the following tropical signomial
    \begin{align*}
        f_{V,\bfw}(\bfx) &= \left(x_1 \oplus x_2 \oplus x_3 \right)^{\nicefrac{1}{3}} \odot \left( -1 \odot x_1 \oplus 1 \odot x_2 \oplus x_3\right)^{\nicefrac{2}{3}}\\
        &= - \frac{2}{3} \odot x_1 \oplus 
        - \frac{2}{3} \odot x_1^{\nicefrac{2}{3}} x_2^{\nicefrac{1}{3}} \oplus 
        -\frac{2}{3} \odot x_1^{\nicefrac{2}{3}} x_3^{\nicefrac{1}{3}} \oplus 
        \frac{2}{3} \odot x_1^{\nicefrac{1}{3}}x_2^{\nicefrac{2}{3}} \\
        &\hspace{.5cm} \oplus \frac{2}{3} \odot x_2
        \oplus \frac{2}{3} \odot x_2^{\nicefrac{2}{3}}x_3^{\nicefrac{1}{3}} \oplus 
        x_1^{\nicefrac{1}{3}}x_3^{\nicefrac{2}{3}} \oplus 
        x_2^{\nicefrac{1}{3}}x_3^{\nicefrac{2}{3}} \oplus 
        x_3 \\
        &= - \frac{2}{3} \odot x_1 \oplus \frac{2}{3} \odot x_1^{\nicefrac{1}{3}} x_2^{\nicefrac{2}{3}} \oplus x_1^{\nicefrac{2}{3}} x_3^{\nicefrac{1}{3}} \oplus \frac{2}{3} \odot x_2 \oplus \frac{2}{3} \odot x_2^{\nicefrac{2}{3}}x_3^{\nicefrac{2}{3}} \oplus x_3.
    \end{align*}
    We can simplify $f_{V, \bfw}$ to a tropical signomial with six terms since three of the nine original terms never achieve the max in $f_{V, \bfw}$. Thus, the above function is piecewise linear with 6 linear pieces of maximal dimension.
\end{example}

\subsection{Minimizing Tropical Signomials}

In this subsection, we describe the Fermat-Weber points of data $V \subset \TP$ with weights $\bfw \in \R^m_{>0}$ using convex and polyhedral geometry.
This connection is facilitated by the formulation of the Fermat-Weber points in the previous section as the minimizers of the tropical signomial $f_{V, \bfw}$ over $\HH$ (\Cref{prop:trop-sig}).
Note that while the results we cite from \cite{ETC} are stated for tropical \textit{polynomials}, the proofs easily extend to tropical signomials.

Fix a tropical max-plus signomial, $f(\bfx) = \bigoplus_{\bfa \in A} c_\bfa \odot \bfx^{\odot \bfa}$. 
It is a piecewise linear, continuous, convex function on $\R^n$. 
We are interested in minimizing $f$ over the subspace $\HH$, which we previously identified with $\TP$.
This minimum, if it exists, is achieved on a face of $\epi(f) \cap \left( \HH \times \R \right)$, where $\epi(f)$ is the \textit{epigraph}\footnote{Note that \cite{ETC} refers to the \textit{dome} of $f$ instead of the epigraph of $f$ because the min-tropical semi-ring is used in that source. The epigraph is the equivalent object for the max-tropical convention.} of $f$
\[ \epi(f) := \{ (\bfx, t) \in \R^n \times \R \mid \bfx \in \R^n, t \geq f(\bfx) \}. \]
The epigraph is an unbounded polyhedron of dimension $n + 1$ \cite[Proposition 1.1]{ETC}.
Each facet of $\epi(f)$ is supported by the hyperplane $\{ (\bfx, t) \mid t = c_{\bfa_F} + \bfa_F \cdot \bfx \}$ for some $\bfa_F \in A$.
Projecting away the last coordinate of the epigraph gives the normal complex of $f$, whose cells are the regions of linearity of $f$.

\begin{defn}[{cf.\ \cite[p. 6]{ETC}}]
    \label{defn:normal-complex}
    Let $f$ be a tropical signomial. The \textit{normal complex} of $f$, which we denote $\NC(f)$, is the polyhedral complex $\{ \pi(F) \mid F \text{ is a face of } \epi(f) \}$, where $\pi: \R^n \times \R \to \R^n$ is the projection to the first factor.
\end{defn}

We will further assume that $f$ is \textit{homogeneous}, i.e.\ the sum of the coordinates of $\bfa$ is the same for all $\bfa \in A$.
For example, $f_{\bfv_i}$ and $f_{V, \bfw}$ are homogeneous of degrees one and $\sum_{i=1}^m w_i$ respectively.
For homogeneous $f$, the lineality space of $\NC(f)$ contains $\1_n$, and thus there is a bijection between cells of $\NC(f)$ and cells of the image $\overline{\NC}(f)$ in $\TP$.
Again, we make the identification $\TP \cong \HH$ and note that with this identification $\overline{\NC}(f) = \NC(f) \cap \HH$.

We now pass to the dual picture.
The \textit{support} of $f$, which we denote by $\supp(f)$, is the set of $\bfa \in A$ such that the coefficient $c_\bfa$ is finite; the \textit{Newton polytope} of $f$ is the convex hull of $\supp(f)$.
For example, the Newton polytope of $f_{\bfv_i}$ is the standard simplex $\Delta^{n-1} = \conv(\bfe_1, \ldots, \bfe_n)$.
The \textit{subdivision dual to} $f$, denoted $\newtsubdiv(f)$, is the regular subdivision\footnote{We use the max-tropical convention, so regular subdivisions are defined via the \textit{upper} convex hull.} of $\supp(f)$ induced by lifting the point $\bfa$ to $(\bfa, c_\bfa)$ for all $\bfa \in \supp(f)$.
\Cref{fig:tropVf} illustrates this subdivision, the epigraph, and the normal complex for the tropical signomial in \Cref{ex:running-ex}.

\begin{figure}[h]
    \centering
    \begin{minipage}{.5\textwidth}
        \centering
        \includegraphics[scale=.85]{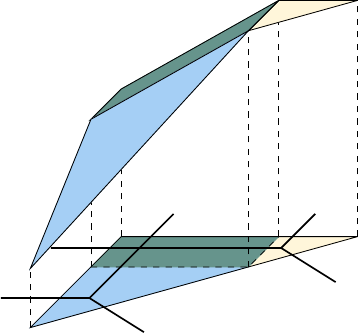}
    \end{minipage}%
    \begin{minipage}{.5\textwidth}
        \centering
        \includegraphics[scale=.95]{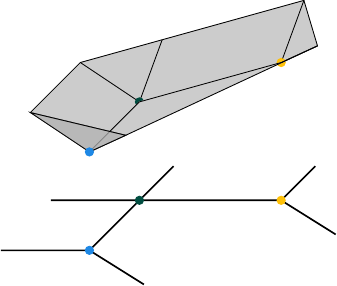}
    \end{minipage}
    \caption{Left: The regular subdivision $\newtsubdiv(f)$ for the tropical polynomial
    $f = - \frac{2}{3} \odot x_1 \oplus \frac{2}{3} \odot x_1^{\nicefrac{1}{3}} x_2^{\nicefrac{2}{3}} \oplus x_1^{\nicefrac{2}{3}} x_3^{\nicefrac{1}{3}} \oplus \frac{2}{3} \odot x_2 \oplus \frac{2}{3} \odot x_2^{\nicefrac{2}{3}}x_3^{\nicefrac{2}{3}} \oplus x_3$. Overlaid is the normal complex $\overline{\NC}(f_{V, \bfw})$. Right: $\epi(f_{V, \bfw}) \cap \HH$ over $\overline{\NC}(f_{V, \bfw})$.}
    \label{fig:tropVf}
\end{figure}

The cells of $\newtsubdiv(f)$ are in bijection with the faces of $\epi(f)$ via the following inclusion reversing correspondence $\beta$ (see \cite[Theorem 1.13]{ETC}).
Each facet $F$ of $\epi(f)$ arises from exactly one term $c_{\bfa_F} \odot \bfx^{\odot \bfa_F}$ of $f$.
Thus, $\beta(F) = \{ \bfa_F \}$.
Now $\beta$ extends to any face by sending the intersection of facets $F_1, \ldots, F_k$ to the convex hull of $\beta(F_1), \ldots, \beta(F_k)$.
The polyhedral complex $\{ \beta(F) \mid F \text{ is a face of } \epi(f) \}$ is exactly $\newtsubdiv(f)$, and each cell $\beta(F)$ encodes the supporting hyperplanes of $F$.
The \textit{cell dual to} $C \in \overline{\NC}(f)$ is $\beta(F)$ where $F$ is the face of $\epi(f)$ such that $C = \overline{\pi(F)}$.
This is well-defined because there is a bijection between cells of $\overline{\NC}(f)$ and faces of $\epi(f)$.

We will say a cell of the subdivision $\newtsubdiv(f)$ is \textit{interior} if it is not contained in a facet of the Newton polytope of $f$; the relative interior of a polytope $P$ is denoted $\relint(P)$.
Interior cells correspond under $\beta$ to the \textit{bounded} cells of $\overline{\NC}(f)$, i.e. the cells which are contained in a classical (Euclidean) ball of finite radius when $\overline{\NC}(f)$ is viewed as a polyhedral complex in $\HH \cong \R^{n-1}$.
The cell of $\newtsubdiv(f)$ that contains $\lambda \1_n$ for some $\lambda \in \R$ in its relative interior is the \textit{central cell} of the Newton polytope.

The dual correspondence $\beta$ leads to the following description of the minimizers of $f$ over~$\HH$.

\begin{lemma}
    \label{lem:barycenter}
    Let $f$ be a homogeneous tropical signomial. The set of minimizers of $f$ over $\HH$ is the cell of $\overline{\NC}(f)$ dual to the central cell of $\newtsubdiv(f)$ for some $\lambda \in \R$. The minimum exists if and only if such a face exists.
\end{lemma}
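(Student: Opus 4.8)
The plan is to characterize the minimizers of $f$ over $\HH$ by convex analysis and then translate the optimality condition through the duality $\beta$. Writing $f(\bfx) = \max_{\bfa \in A}(c_\bfa + \bfa \cdot \bfx)$ exhibits $f$ as a finite maximum of affine functions, hence a finite-valued convex piecewise-linear function on $\R^n$. The first step is to identify its subdifferential with a dual cell. If $\bfx$ lies in the relative interior of the cell $\overline{\pi(F)}$ of $\NC(f)$, and $F = F_1 \cap \cdots \cap F_k$ is the intersection of the facets of $\epi(f)$ containing it, then the affine terms attaining the maximum at $\bfx$ are exactly those indexed by $\bfa_{F_1}, \ldots, \bfa_{F_k}$, so $\partial f(\bfx) = \conv\{\bfa_{F_1}, \ldots, \bfa_{F_k}\} = \beta(F)$. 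In other words, the subdifferential of $f$ at $\bfx$ is precisely the cell of $\newtsubdiv(f)$ dual to $\overline{\pi(F)}$.

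Next I would invoke the optimality condition for minimizing a convex function over a linear subspace. Since $\HH$ is linear, a point $\bfx \in \HH$ minimizes $f|_\HH$ if and only if $0$ lies in the subdifferential of the restriction, which equals the orthogonal projection $\mathrm{pr}_\HH(\partial f(\bfx))$; equivalently, $\partial f(\bfx)$ meets $\HH^\perp = \R \1_n$. Here homogeneity is decisive: every $\bfa \in A$ satisfies $\bfa \cdot \1_n = d$ for the common degree $d = \deg f$, so every element of $\partial f(\bfx)$ has coordinate sum $d$, and the only possible point of $\partial f(\bfx) \cap \R\1_n$ is $\tfrac{d}{n}\1_n$. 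Thus $\bfx$ is a minimizer precisely when $\tfrac{d}{n}\1_n \in \beta(F)$, and this value $\lambda = d/n$ is exactly the one in the definition of the central cell.

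It then remains to see that the set of all such $\bfx$ is a single closed cell of $\overline{\NC}(f)$. Let $\beta(F^\ast)$ be the central cell, i.e.\ the unique cell of $\newtsubdiv(f)$ containing $\tfrac{d}{n}\1_n$ in its relative interior. Because $\newtsubdiv(f)$ is a polyhedral complex and a point lies in the relative interior of a unique cell, a cell $\beta(F)$ contains $\tfrac{d}{n}\1_n$ if and only if it has $\beta(F^\ast)$ as a face. Translating through the inclusion-reversing bijection $\beta$, this occurs exactly when $F \preceq F^\ast$, i.e.\ when $\overline{\pi(F)}$ is a face of the dual cell $C^\ast := \overline{\pi(F^\ast)}$. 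Since $C^\ast$ is the disjoint union of the relative interiors of its faces, the union of the cells $\overline{\pi(F)}$ with $F \preceq F^\ast$ is all of $C^\ast$; intersecting with $\HH$ identifies the minimizer set with the cell of $\overline{\NC}(f)$ dual to the central cell.

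For the existence clause, I would use that as $\bfx$ ranges over $\R^n$ the subdifferentials $\partial f(\bfx)$ run through all cells $\beta(F)$, whose union is the whole Newton polytope $\conv(\supp f)$; hence some subdifferential contains $\tfrac{d}{n}\1_n$ — equivalently, the central cell exists — if and only if $\tfrac{d}{n}\1_n \in \conv(\supp f)$. When this holds the optimality condition above is satisfiable and the minimum is attained, while if it fails no $\bfx \in \HH$ satisfies the condition, so $f|_\HH$ has no critical point and, being convex, is unbounded below. I expect the main obstacle to lie in the convex-analytic bookkeeping of the middle steps: justifying the subdifferential-of-restriction identity $\partial(f|_\HH) = \mathrm{pr}_\HH \circ \partial f$, and, more delicately, arguing that the minimizer set is the \emph{entire closed} dual cell $C^\ast$ rather than only its relative interior, which is precisely where the polyhedral-complex structure of $\newtsubdiv(f)$ and the inclusion-reversal of $\beta$ must be combined.
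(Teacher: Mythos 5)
Your proof is correct and takes essentially the same route as the paper: the paper's condition that a face of $\epi(f)$ be supported by a hyperplane $t = \bfm \cdot \bfx + c$ with $\bfm = \lambda \1_n$ is exactly your subdifferential condition $\partial f(\bfx) \cap \R\1_n \neq \emptyset$, and both arguments then translate this through the duality $\beta$ into membership of $\lambda\1_n$ in the dual cell. Your write-up supplies details the paper leaves implicit---pinning down $\lambda = \deg(f)/n$ via homogeneity, the identity $\partial f(\bfx) = \beta(F)$, and reassembling the full closed dual cell from the relative interiors of its faces---but the underlying argument is the same.
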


\begin{proof}
    The minimum of $f$ over $\HH$ exists if and only if there is a face $\widehat{F}$ of $\epi(f)$ supported by a hyperplane of the form $t = \bfm \cdot \bfx + c$, where $\langle \bfm, \bfx \rangle = 0$ when restricted to $\HH$.
    Such a hyperplane must have $\bfm = \lambda \1_n$ for $\lambda \in \R$.
    In terms of the dual subdivision, this means $\widehat{F}$ is a minimizing face if and only if $\lambda \1_n \in \relint(\beta(\widehat{F}))$. The set of minimizers of $f$ over $\HH$ is then the cell of $\overline{\NC}(f)$ that arises as the projection of $\widehat{F}$ to $\HH$.
\end{proof}

We take a brief detour to describe the combinatorics of the subdivision dual to a product of tropical polynomials with positive real exponents, e.g.\ $f_{V, \bfw}$.
First, we recall the situation for products of tropical polynomials.
Given a tropical polynomial $f$ along with a factorization into tropical polynomials, $f(\bfx) = \bigodot_{i=1}^m f_i(\bfx)$, the subdivision dual to $f$ is a regular \textit{mixed subdivision} of the Minkowski sum $N_1 + \cdots + N_m$, where $N_i = \supp(f_i)$ (see \cite[Section 4.1, 4.2]{ETC}).
The (regular) \textit{mixed subdivisions} of $N_1 + \cdots + N_m$ are exactly those subdivisions arising from a (regular) subdivision of the \textit{Cayley embedding} of $N_1, \ldots, N_m$, which is 
\[\mathrm{Cayley}(N_1, \ldots, N_m) := \bfe_1 \times N_1 \cup \cdots \cup \bfe_m \times N_m \subset \R^m \times \R^n.\]

The scaled Minkowski sum $\frac{1}{m} (N_1 + \cdots + N_m)$ is identified with the linear section of the Cayley polytope $W(\frac{1}{m} \1_m) \cap \mathrm{Cayley}(N_1, \ldots, N_1)$, where $W(\bfw) := \{ \bfw \} \times \R^n$ for any weight vector $\bfw$.
A subdivision $\Sigma$ of the Cayley polytope gives rise to the subdivision $W \cap \Sigma := \{ W(\frac{1}{m}) \cap C \mid C \in \Sigma \}$ of the linear section.
Then 
\[\mathrm{M}(\Sigma) := \{ \conv(A_1 + \cdots + A_m) \mid \conv(\bfe_1 \times A_1, \ldots, \bfe_m \times A_m) \in \Sigma \} \]
is the corresponding subdivision of $N_1 + \cdots + N_m$.
We say a subdivision of $N_1 + \cdots + N_m$ that arises from this construction \textit{coincides} with $W \cap \Sigma$.
The correspondence is known as the Cayley trick. It was first proved in \cite{sturmfels-cayley-trick} for regular subdivision, and then in \cite{hrs-cayley-trick} for all subdivisions; see also \cite[Section 9.2]{triangulations-textbook}.
The specific regular subdivision of $\mathrm{Cayley}(N_1, \ldots, N_m)$ that gives rise to the subdivision dual to $f$ is the following.

\begin{defn}[{see \cite[Corollary 4.9]{ETC}}]
    \label{defn:cayley-subdivsion}
    Given tropical polynomials $f_i(\bfx) = \bigoplus_{\bfa \in A_i} c_{i, \bfa} \odot \bfx^{\odot \bfa}$, $i = 1, \ldots, m$, the \textit{Cayley subdivision dual to} $f(\bfx) = \bigodot_{i=1}^m f_i(\bfx)$, denoted $\Sigma(f_1, \ldots, f_m)$, is the regular subdivision of $\mathrm{Cayley}(\supp(f_1), \ldots, \supp(f_m))$ induced by lifting $(\bfe_i, \bfa)$ to $(\bfe_i, \bfa, c_{i, \bfa})$.
\end{defn}

The Cayley trick generalizes to the weighted case, where the weighted Minkowski sum $w_1 N_1 + \cdots + w_m N_m$ is identified with the linear section $W(\bfw) \cap \mathrm{Cayley}(N_1, \ldots, N_m)$ (for more details, see \cite[Section 3]{hrs-cayley-trick}).
Combining this with \cite[Corollary 4.9]{ETC} leads to the following statement, which describes the subdivision dual to a product of tropical polynomials with positive real exponents.

\begin{lemma}
    \label{thm:cayley-trick}
    Let $f_1, \ldots, f_m$ be tropical polynomials and let $\bfw \in \R^m_{>0}$ be a weight vector. Recall that the entries of a weight vector sum to one. The subdivision dual to $f(\bfx) = \bigodot_{i=1}^m f_i (\bfx)$ coincides with $W(\bfw) \cap \Sigma(f_1, \ldots, f_m)$.
\end{lemma}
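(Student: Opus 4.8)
The plan is to reduce the claim to the weighted Cayley trick of \cite{hrs-cayley-trick} by checking that the lifting function it produces on the weighted Minkowski sum agrees with the coefficient function of the product $f = \bigodot_{i=1}^m f_i^{\odot w_i}$ (a product of tropical polynomials with positive real exponents $w_i$). The conceptual point is that the exponents $w_i$ are not recorded by the Cayley subdivision $\Sigma(f_1,\ldots,f_m)$ itself---which by \Cref{defn:cayley-subdivsion} depends only on the coefficients $c_{i,\bfa}$ of the $f_i$ and not on $\bfw$---but instead by the position $W(\bfw)$ of the linear slice.

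First I would set $N_i = \supp(f_i)$ and write $f$ classically. Since tropical multiplication is ordinary addition and the tropical $w_i$-th power scales by $w_i$, expanding the product gives
\[ f(\bfx) = \sum_{i=1}^m w_i \max_{\bfa \in N_i}\bigl(c_{i,\bfa} + \bfa \cdot \bfx\bigr) = \max_{(\bfa_1,\ldots,\bfa_m)} \Bigl[ \textstyle\sum_i w_i c_{i,\bfa_i} + \bigl(\sum_i w_i \bfa_i\bigr)\cdot \bfx \Bigr], \]
where the outer maximum runs over tuples $(\bfa_1,\ldots,\bfa_m) \in N_1 \times \cdots \times N_m$. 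Reading off exponents and coefficients, the support of $f$ is $\{\sum_i w_i \bfa_i\}$, whose convex hull is the weighted Minkowski sum $w_1 N_1 + \cdots + w_m N_m$, and the coefficient attached to a point $\bfb$ of this sum is the sup-convolution $\max\{\sum_i w_i c_{i,\bfa_i} : \sum_i w_i \bfa_i = \bfb\}$. Hence $\newtsubdiv(f)$ is the regular subdivision of $w_1 N_1 + \cdots + w_m N_m$ induced by this sup-convolution lift (taking upper hulls, per the max-convention).

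Next I would invoke the weighted Cayley trick. By \cite[Section 3]{hrs-cayley-trick}, slicing the regular subdivision $\Sigma(f_1,\ldots,f_m)$ of $\mathrm{Cayley}(N_1,\ldots,N_m)$ at $W(\bfw)$, for $\bfw$ in the open simplex, yields a regular mixed subdivision of $w_1 N_1 + \cdots + w_m N_m$; its cells are the weighted Minkowski sums $w_1 B_1 + \cdots + w_m B_m$ of the cells $\conv(\bfe_1 \times B_1, \ldots, \bfe_m \times B_m)$ of $\Sigma(f_1,\ldots,f_m)$. Because $\Sigma(f_1,\ldots,f_m)$ is induced by the lift $(\bfe_i,\bfa)\mapsto c_{i,\bfa}$, the induced lift on the slice sends $\bfb = \sum_i w_i \bfa_i$ to $\sum_i w_i c_{i,\bfa_i}$, and passing to the upper hull this is exactly the sup-convolution computed above. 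Thus $\mathrm{M}(W(\bfw)\cap\Sigma(f_1,\ldots,f_m))$ and $\newtsubdiv(f)$ are the regular subdivision of the same point configuration induced by the same height function, so they coincide, which is the asserted coincidence.

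The step I expect to be the main obstacle is matching the two lifting functions, rather than reproving the Cayley trick: one must check that the collision of several tuples $(\bfa_1,\ldots,\bfa_m)$ onto a single point $\bfb$ of the Minkowski sum is resolved in the same way on both sides. On the $f$ side this collision is handled by the tropical (max) addition of coinciding monomials, while on the Cayley side it is handled by the upper-hull structure of the linear section; verifying that these agree is precisely the content of \cite[Corollary 4.9]{ETC} transported through the slice $W(\bfw)$. Once the lifting functions are identified, the equality of the two regular subdivisions follows from the uniqueness of the regular subdivision induced by a given height function.
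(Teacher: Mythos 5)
Your proof is correct and takes essentially the same route as the paper: the paper's proof consists of citing \cite[Corollary 4.9]{ETC} for the equally weighted case and \cite[Lemma 3.2]{hrs-cayley-trick} for moving the linear section from $W(\tfrac{1}{m}\1_m)$ to $W(\bfw)$, which are exactly the two ingredients you combine. Your explicit expansion of the weighted product, the sup-convolution lift, and the matching of height functions on the slice supply the details that the paper delegates to those citations.
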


\begin{proof}
    The equally weighted case, where $w_i = \frac{1}{m}$ for all $i$, is Corollary 4.9 of \cite{ETC}.
    Varying the weights shifts the linear section of the Cayley polytope from $W(\frac{1}{m} \1_m ) \cap \Sigma(f_1, \ldots, f_m)$ to $W (\bfw) \cap \Sigma(f_1, \ldots, f_m)$ (see \cite[Lemma 3.2]{hrs-cayley-trick}).
\end{proof}

We conclude the subsection with an illustration of the Cayley trick for the data set from \Cref{ex:running-ex}.

\begin{example}
    Let $V = \{ \bfv_1 = (0,0,0), \bfv_2 = (1,-1,0) \}$. The Cayley subdivision dual to $f_{\bfv_1}^{w_1} \odot f_{\bfv_2}^{w_2}$ is the subdivision of $\Delta^{1} \times \Delta^2$ illustrated in \Cref{fig:cayley-trick}. Note that it does not depend on $w_1, w_2$.
    \Cref{fig:cayley-trick} also contains the subdivision (of the support) dual to $f_{\bfv_1}^{\nicefrac{1}{2}} \odot f_{\bfv_2}^{\nicefrac{1}{2}}$ (the unweighted case) and the subdivision (of the support) dual to $f_{\bfv_1}^{\nicefrac{1}{3}} \odot f_{\bfv_2}^{\nicefrac{2}{3}}$.

    \begin{figure}[h]
        \centering
        \includegraphics{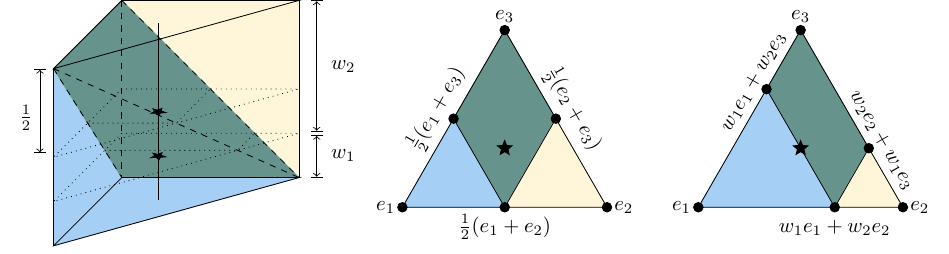}
        \caption{Left to right: the Cayley subdivision for the data $V = \{ \bfv_1 = (0,0,0), \bfv_2 = (1,-1,0) \}$, the subdivision (of the support) dual to $f_{V, \frac{1}{2} \1_2}$, and the subdivision (of the support) dual to $f_{V, (\frac{1}{3}, \frac{2}{3})}$.
        In the Cayley subdivision, the top star is the point $(\frac{1}{2} \1_2, \frac{1}{3} \1_3)$ and the bottom star is the point $(\bfw, \frac{1}{3} \1_3)$. 
        In the middle and right pictures, the star is the point $\frac{1}{2} \1_3$.}
        \label{fig:cayley-trick}
    \end{figure}
\end{example}

\subsection{Tropical Convexity}

In this subsection, we connect the Fermat-Weber points to the tropical convex hull of the data. 
We begin by discussing the tropical vanishing set of $f_{V, \bfw}$.

\begin{defn}[{cf.\ \cite[Definition 1.3]{ETC}}]
    \label{defn:tropical-vanishing}
    The \textit{tropical vanishing set} or \textit{tropical hypersurface} of a tropical signomial $f(\bfx) = \bigoplus_{\bfa \in A} c_\bfa \odot \bfx^{\bfa}$, denoted $\trV(f)$, is the set of $\bfx \in \R^n$ for which the maximum in $f(\bfx)$ is achieved at least twice.
\end{defn}

We continue to assume that $f$ is homogeneous, and so $\trV(f)$ contains $\1_n$ in its lineality space.
The image of $\trV(f)$ in $\TP$ is denoted by $\overline{\trV}(f)$.
In terms of the previous subsection, the tropical hypersurface $\trV(f)$ is the codimension-1-skeleton of the normal complex\footnote{Or, equivalently, the codimension-2-skeleton of the epigraph of $f$, projected onto the first factor.} of $f$, $\NC(f)$, and similarly for $\overline{\trV}(f)$ and $\overline{\NC}(f)$ \cite[Corollary 1.6]{ETC}.
Thus, $\overline{\trV}(f)$ has the structure of a polyhedral complex.

It is well-known that the vanishing set of a product of tropical polynomials is the union of the tropical vanishing sets of each factor \cite[Lemma 4.6]{ETC}.
The following result is the analogue for tropical signomials.

\begin{lemma}
    \label{prop:tropical-vanishing-product}
    Let $f$ be a tropical polynomial, and let $w > 0$. Then $\trV(f^w)$ is (as a set) equal to $\trV(f)$. Moreover,
    $$\trV\left( \bigodot_{i=1}^m f_i^{\odot w_i} \right) = \bigcup_{i=1}^m \trV(f_i), \text{ for any } \bfw \in \R_{> 0}^m.$$
    In particular, $\trV(f_V) = \trV(f_{V, \bfw})$.
\end{lemma}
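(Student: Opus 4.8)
The plan is to prove the three assertions in sequence, building from the single-factor case to the product, and finally specializing to recover the claim about $f_V$ and $f_{V,\bfw}$.

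First I would handle the claim that $\trV(f^{\odot w}) = \trV(f)$ as sets, for a single tropical polynomial $f$ and a scalar $w > 0$. The key observation is that the tropical power $f^{\odot w}$ corresponds classically to multiplying the piecewise-linear function $f$ by the positive scalar $w$: if $f(\bfx) = \max_{\bfa \in A}(c_\bfa + \bfa \cdot \bfx)$, then $f^{\odot w}(\bfx) = w \cdot f(\bfx) = \max_{\bfa \in A}(w c_\bfa + w\bfa \cdot \bfx)$, whose terms are indexed by the scaled exponents $w\bfa$. A point $\bfx$ lies in $\trV(f)$ exactly when the maximum $\max_\bfa (c_\bfa + \bfa\cdot\bfx)$ is attained by at least two distinct indices $\bfa, \bfa'$. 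Since $t \mapsto wt$ is a strictly increasing bijection on $\R$, the set of indices attaining the maximum of $f$ at $\bfx$ coincides with the set of indices attaining the maximum of $f^{\odot w}$ at $\bfx$; in particular the ``achieved at least twice'' condition is preserved verbatim. Thus $\trV(f^{\odot w}) = \trV(f)$ as sets. (One should note the caveat that scaling the exponents by $w$ may change the lattice/integrality of the Newton polytope, but since we work with signomials and the statement is only about the underlying set in $\R^n$, this is harmless.)

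Next I would prove the product formula $\trV\bigl(\bigodot_{i=1}^m f_i^{\odot w_i}\bigr) = \bigcup_{i=1}^m \trV(f_i)$. Writing $g = \bigodot_i f_i^{\odot w_i}$, classically we have $g(\bfx) = \sum_{i=1}^m w_i f_i(\bfx)$, a sum of convex piecewise-linear functions. The cited result \cite[Lemma 4.6]{ETC} establishes the analogous statement for a product of tropical \emph{polynomials} (i.e.\ the unweighted integer-exponent case), namely that the maximum defining the product is achieved at least twice precisely when at least one factor's maximum is achieved at least twice. The cleanest route is to combine the first assertion with this cited lemma: by the first part, $\trV(f_i^{\odot w_i}) = \trV(f_i)$ for each $i$, and by \cite[Lemma 4.6]{ETC} (whose proof, as the excerpt notes, extends from polynomials to signomials), $\trV(\bigodot_i f_i^{\odot w_i}) = \bigcup_i \trV(f_i^{\odot w_i})$. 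Chaining these gives the result. If one prefers a self-contained argument, one can argue directly: the regions of linearity of the sum $\sum_i w_i f_i$ refine the common refinement of the regions of linearity of the individual $f_i$, and a point $\bfx$ fails to be in the relative interior of a top-dimensional linearity region of $g$ iff it fails to lie in such a region for at least one $f_i$, which is precisely the membership condition for $\bigcup_i \trV(f_i)$; this uses that all $w_i > 0$ so that no factor is suppressed.

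Finally, the statement $\trV(f_V) = \trV(f_{V,\bfw})$ follows by specialization. By \Cref{defn:f-V-w}, $f_{V,\bfw} = \bigodot_{i=1}^m f_{\bfv_i}^{\odot w_i}$, while $f_V$ is the unweighted product $\bigodot_{i=1}^m f_{\bfv_i}$ (equivalently $f_{V, \1_m}$, the case $w_i = 1$). Applying the product formula to both yields $\trV(f_{V,\bfw}) = \bigcup_i \trV(f_{\bfv_i}) = \trV(f_V)$. I expect the only genuine subtlety to be the first step: one must be careful to phrase the argument purely in terms of which indices achieve the max, rather than via Newton polytopes or dual subdivisions, because tropically raising to a non-integer power $w$ scales exponents and takes us outside the polynomial setting into signomials. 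Since the definition of $\trV$ is stated directly for signomials in \Cref{defn:tropical-vanishing}, this causes no real obstacle, but it is the point where invoking polynomial-specific machinery would fail, so the proof should lean on the order-preserving nature of scaling the max.
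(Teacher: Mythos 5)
Your proposal is correct, but its structure differs from the paper's proof. The paper proves the weighted product formula in a single self-contained argument: using that the value of $\bigodot_{i=1}^m f_i^{\odot w_i}$ at $\bfx$ is the sum $\sum_i w_i f_i(\bfx)$, it shows (i) if the maximum in some $f_j$ is attained by two exponents $\bfa_1 \neq \bfa_2$, then swapping the $j$-th summand yields two distinct terms of the product attaining its maximum, and (ii) conversely, two distinct maximizing terms of the product come from two tuples $(\bfa_i^1)$, $(\bfa_i^2)$ that differ in some coordinate $j$, forcing two maximizers of $f_j$. The weights are absorbed directly into this swap argument, so the paper never isolates the single-factor statement $\trV(f^{\odot w}) = \trV(f)$; that is just the $m=1$ case. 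You instead modularize: first the power case (via invariance of the argmax under the strictly increasing map $t \mapsto wt$, which is correct and cleanly handles the non-integer exponents), then the product case by citing \cite{ETC} Lemma 4.6 ``extended to signomials.'' Be aware that in your primary route this citation carries the real weight of the lemma: the paper explicitly frames the statement as ``the analogue for tropical signomials'' of that lemma and chooses to re-prove it rather than treat the extension as citable, so leaning on the blanket extension remark comes close to assuming the crux. Your self-contained fallback repairs this, and it is genuinely different from the paper's argument: identifying $\trV$ of a signomial with its locus of non-linearity, and using that a convex summand of a locally affine sum (with positive coefficients) must itself be locally affine, gives the union formula by convexity rather than by term-by-term combinatorics. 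That convexity argument is arguably more robust, since it applies verbatim to any finite positively weighted sum of convex piecewise-linear functions, whereas the paper's argument stays closer to the bookkeeping of exponents that the dual-subdivision machinery elsewhere in the paper relies on.
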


\begin{proof}
    Let $f_i = \bigoplus_{\bfa \in A_i} c_\bfa \odot \bfx^{\odot \bfa}$, $A_i \subset \R^n_{\geq 0}$. 
    If the maximum in $f_j$ is achieved twice by $\bfx$, then $f_j^{\odot w_j}(\bfx) = w_j(c_{\bfa_1} + \bfa_1 \cdot \bfx) = w_j(c_{\bfa_2} + \bfa_2 \cdot \bfx)$ for some $\bfa_1 \neq \bfa_2 \in A_j$. 
    It follows that the maximum in $\bigodot_{i=1}^m f_i^{\odot w_i}$ is also achieved at least twice: 
    $$\bigodot_{i=1}^m f_i^{\odot w_i} = w_j(c_{\bfa_1} + \bfa_1 \cdot \bfx) + \sum_{i \neq j} f_i^{\odot w_i}(\bfx) = w_j(c_{\bfa_2} + \bfa_2 \cdot \bfx) + \sum_{i \neq j} f_i^{\odot w_i}(\bfx).$$
    On the other hand, if the maximum is achieved twice in $\bigodot_{i=1}^m f_i^{\odot w_i}$ at $\bfx$, then we must be able to write the maximum as two distinct sums: $\sum_{i=1}^m w_i (c_{\bfa_i^1} + \bfa_i^1 \cdot \bfx) = \sum_{i=1}^m w_i (c_{\bfa_i^2} + \bfa_i^2 \cdot \bfx)$, where $\bfa_i^1, \bfa_i^2 \in A_i$. It follows that for some $j$, $\bfa_j^1 \neq \bfa_j^2$, so the maximum in $f_j^{\odot w_j}$ is achieved at least twice.
\end{proof}

The tropical vanishing set of $f_{\bfv_i}$ is the \textit{tropical hyperplane centered at $\bfv_i$}, and 
thus, $\trV(f_V)$ is the \textit{arrangement of tropical hyperplanes centered at} $\bfv_1,\ldots,\bfv_m \in V$.
Since $\trV(f_{V, \bfw})$ is independent of the specific weights $\bfw$, it is also the tropical hyperplane arrangement centered at $V$.
A tropical hyperplane arrangement induces a polyhedral subdivision of $\R^n$ (and of $\TP$) called the \textit{(tropical) covector decomposition induced by $V$}, which was introduced by Develin and Sturmfels in \cite{develin2004tropical}; cells of this subdivision are called \textit{covector cells}.
We use $\CD(V)$ to denote the covector subdivision of $\TP$ by $V$.
The covector decomposition agrees with the normal complex $\overline{\NC}(f_{V, \1_m})$ (see \cite[Section 6.3]{ETC}), and below, we show that it also agrees with the normal complex of $f_{V, \bfw}$ for any $\bfw \in \R^m_{>0}$.

\begin{lemma}
    \label{cor:normal-complex-fVw}
    For any $\bfw \in \R^m_{>0}$, we have $\NC(f_{V, \1_m}) = \NC(f_{V, \bfw})$.
    Moreover, the covector decomposition induced by $V$ agrees with $\overline{\NC}(f_{V, \bfw})$ for any $\bfw \in \R^m_{> 0}$.
\end{lemma}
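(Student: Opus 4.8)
The plan is to establish the first equality, $\NC(f_{V,\1_m}) = \NC(f_{V,\bfw})$, from which the statement about the covector decomposition follows almost immediately. The guiding principle is that, although the \emph{value} of $f_{V,\bfw}$ depends on the weights, its \emph{regions of linearity} do not move as the positive weights vary, so the underlying normal complex is weight-independent.

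The engine of the proof is \Cref{prop:tropical-vanishing-product}, which gives $\trV(f_{V,\bfw}) = \trV(f_V) = \trV(f_{V,\1_m})$; thus the two normal complexes share the same tropical vanishing set as a subset of $\R^n$. Since $\trV(f)$ is precisely the codimension-$1$ skeleton of $\NC(f)$, it suffices to argue that a normal complex is recovered from its codimension-$1$ skeleton. Indeed, the top-dimensional cells of $\NC(f)$ are the maximal regions on which $f$ is linear, i.e.\ the closures of the connected components of $\R^n \setminus \trV(f)$, each of which is a convex polyhedron; every lower-dimensional cell is a face of such a top cell, because each face of the polyhedron $\epi(f)$ is contained in a facet and projection by $\pi$ preserves this incidence. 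Applying this recovery to the common set $\trV(f_{V,\bfw}) = \trV(f_{V,\1_m})$ yields identical top cells, hence identical face posets, giving $\NC(f_{V,\bfw}) = \NC(f_{V,\1_m})$.

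To make this robust and to isolate where positivity of the weights enters, I would also give a two-sided refinement argument. A positive combination of convex functions can be affine on a full-dimensional convex set only if each summand is affine there. Applying this to a region of linearity of $f_{V,\1_m} = \sum_{i=1}^m f_{\bfv_i}$ shows each $f_{\bfv_i}$, and hence $f_{V,\bfw} = \sum_{i=1}^m w_i f_{\bfv_i}$, is affine on every cell of $\NC(f_{V,\1_m})$, so $\NC(f_{V,\1_m})$ refines $\NC(f_{V,\bfw})$; applying it to a region of linearity of $f_{V,\bfw}$ (this is where $w_i > 0$ is essential) shows $f_{V,\1_m}$ is affine on every cell of $\NC(f_{V,\bfw})$, giving the reverse refinement. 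Mutual refinement yields equality. I expect the crux to be exactly this positivity input together with the determinacy principle of the previous paragraph: one must ensure $f_{V,\bfw}$ genuinely bends across every wall of $\trV(f_{V,\bfw})$ and stays linear inside each cell, so that no region of linearity is spuriously merged or split.

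For the second assertion, I would descend from $\R^n$ to $\TP \cong \HH$. Homogeneity of $f_{V,\bfw}$ places $\1_n$ in the lineality space of $\NC(f_{V,\bfw})$, so the equality $\NC(f_{V,\bfw}) = \NC(f_{V,\1_m})$ passes to the quotient as $\overline{\NC}(f_{V,\bfw}) = \overline{\NC}(f_{V,\1_m})$. The right-hand side is the covector decomposition $\CD(V)$ by the identification recalled just before the lemma (\cite[Section 6.3]{ETC}), which completes the proof.
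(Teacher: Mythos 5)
Your proposal is correct and follows essentially the same route as the paper: invoke \Cref{prop:tropical-vanishing-product} to identify the tropical vanishing sets, use the fact that $\trV(f)$ determines $\NC(f)$, and then descend to $\TP$ and cite \cite[Section 6.3]{ETC} to identify $\overline{\NC}(f_{V,\1_m})$ with the covector decomposition. The only difference is that you spell out why the vanishing set determines the normal complex (closures of complementary components plus their faces) and add a convexity-based mutual-refinement argument, both of which the paper leaves implicit in the one-line assertion that ``$\trV(f)$ determines $\NC(f)$.''
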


\begin{proof}
    The tropical vanishing set $\trV(f)$ determines the normal complex $\NC(f)$, so by \Cref{prop:tropical-vanishing-product}, $\NC(f_{V, \1_m}) = \NC(f_{V, \bfw})$ for all $\bfw \in \R^m_{>0}$.
    The subdivision $\CD(V)$ is the normal complex $\overline{\NC}(f_{V, \1_m})$ \cite[Section 6.3]{ETC}, and thus $\CD(V)$ agrees with $\overline{\NC}(f_{V, \bfw})$.
\end{proof}

The union of all bounded (max-tropical) covector cells of $V$ forms the \textit{min-tropical convex hull} of $V$ \cite[Observation 6.12]{ETC}, the definition of which we recall below.
In particular, the \textit{min}-tropical convex hull of $V$ has the structure of a polyhedral complex whose cells are exactly the bounded (max-tropical) covector cells of $V$.
The \textit{min}-tropical convex hull uses the operations of the \textit{min}-tropical semi-ring, $a \boxplus b = \min(a, b)$ and $a \odot b = +$.

\begin{defn}[{see \cite[Section 5.2]{ETC}}]
    \label{defn:tconv}
    The \textit{min-tropical convex hull} of $\{ \bfv_1, \ldots, \bfv_m \} = V \subset \TP$, which we denote by $\tconv(V)$ or $\tconv(\bfv_1, \ldots, \bfv_m)$, is the set of all $\min$-tropical linear combinations of points in $V$. More precisely,
    \begin{equation}
        \label{eqn:tconv-defn}
        \tconv(V) := \{ \lambda_1 \odot \tilde{\bfv}_1 \boxplus \cdots \boxplus \lambda_m \odot \tilde{\bfv}_m \mid \lambda_i \in \R \},
    \end{equation}
    where $\tilde{\bfv}_i \in \R^n$ is any representative of $\bfv_i$.
\end{defn}

Note that tropical convex hull is independent of the representatives $\tilde{\bfv}_i$ chosen for the points $\bfv_i$ in $V$. That is, if $\tilde{\bfv}_i^\prime = \mu_i \odot \tilde{\bfv}_i$ for some $\mu_i \in \R$, then
\begin{equation*}
    \label{eqn:diff-reps-equiv}
    \lambda_1 \odot \tilde{\bfv}_1 \boxplus \cdots \boxplus \lambda_m \odot \tilde{\bfv}_m = (\lambda_1 - \mu_1) \odot \tilde{\bfv}_1^\prime \boxplus \cdots \boxplus (\lambda_m - \mu_m) \odot \tilde{\bfv}_m^\prime.
\end{equation*}

\begin{example}
    \label{ex:tconv}
    Let $\bfv_1 = (0,0,0)$, and $\bfv_2 = (1,-1,0)$. The tropical polytope $\tconv(\bfv_1, \bfv_2)$ is illustrated in \Cref{fig:tconv-ex}. It consists of five covector cells: three points and two line segments.

    \begin{figure}[h]
        \centering
        \includegraphics{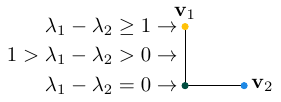}
        \caption{The tropical line segment $\tconv((0,0,0), (1,-1,0))$.}
        \label{fig:tconv-ex}
    \end{figure}
\end{example}

We conclude the section by recalling \textit{covector graphs}, which were introduced by Develin and Sturmfels in \cite{develin2004tropical}.
In terms of the notation in this paper, we can understand the correspondence between covector cells and bipartite graphs as follows.
The covector decomposition $\CD(V) = \overline{\NC}(f_{V, \bfw})$ is dual the mixed subdivision of the Newton polytope $\newtsubdiv({f_{V, \bfw}})$, which arises via the Cayley trick to a subdivision of $\mathrm{Cayley}(\supp(f_{\bfv_1}), \ldots, \supp(f_{\bfv_m}))$.
As the Newton polytope of each factor $f_{\bfv_i}$ is an $(n-1)$ dimensional simplex, the Cayley embedding $\mathrm{Cayley}(\supp(f_{\bfv_1}), \ldots, \supp(f_{\bfv_m}))$ is the product of simplices $\Delta^{m-1} \times \Delta^{n-1}$.
Thus, each covector cell corresponds to a cell in a subdivision of~$\Delta^{m-1} \times \Delta^{n-1}$.

There is a well-known correspondence between subsets of the vertices of $\Delta^{m-1} \times \Delta^{n-1}$ and subgraphs of the complete bipartite graph with $m$ left vertices, and $n$ right vertices, which we call $K_{m,n}$ (for more details, see \cite[Section 6.2.2]{triangulations-textbook}). 
The vertex $(\bfe_i, \bfe_j) \in \Delta^{m-1} \times \Delta^{n-1}$ corresponds to the edge between left vertex $i$ and right vertex $j$ in the bipartite graph. 
Thus, a subset of vertices $A$ of the vertices of $\Delta^{m-1} \times \Delta^{n-1}$ corresponds to the subgraph of $K_{m,n}$.
See \Cref{ex:covector-graphs} for covector graphs of the running example.

\begin{example}[Simplex-Forest Correspondence for $m = n = 2$]
    \label{ex:covector-graphs}
    The product of two $2$-simplices (i.e.\ line segments) is a square; the corresponding bipartite graph has two left vertices and two right vertices. Both are illustrated in \Cref{fig:forest-simplex}. The vertex $(\bfe_i, \bfe_j)$ in the simplex corresponds to the edge $(l_i, r_j)$ in the bipartite graph. For example, the top left vertex of the shaded gray simplex, $(\bfe_1, \bfe_2)$, corresponds to the edge $(l_1, r_2)$.
    \begin{figure}[!h]
        \centering
        \includegraphics{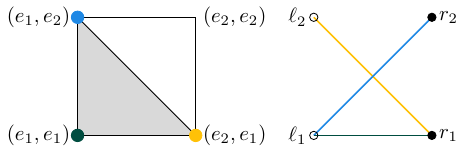}
        \caption{Left: vertices in the product of simplices. Right: the corresponding edges of the bipartite subgraph of $K_{2,2}$.}
        \label{fig:forest-simplex}
    \end{figure}
\end{example}

\section{The weighted tropical Fermat-Weber problem}
\label{sec:main}

We now come to the proof of \Cref{thm:main}, which is broken into two parts: \Cref{thm:containment} and \Cref{thm:main-2}.
First, we prove that the weighted Fermat-Weber points form a covector cell of the min-tropical convex hull of the data.

\begin{theorem}
    \label{thm:containment}
    Given data $\{ \bfv_1, \ldots, \bfv_m \} = V \subset \TP$, and a weight vector $\bfw \in \R^m$, the weighted (tropical asymmetric) Fermat-Weber set is the covector cell of $\mbox{tconv}(\bfv_1, \ldots, \bfv_m)$ that is dual to central cell of $\newtsubdiv(f)$.
\end{theorem}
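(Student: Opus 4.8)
The plan is to chain together the results already assembled in Section~\ref{sec:background}, the one genuinely new point being the verification that the relevant cell of the covector decomposition is bounded, hence actually lies inside $\tconv(V)$.

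First I would invoke Proposition~\ref{prop:trop-sig} to replace the Fermat-Weber set $\FW(V, \bfw)$ with the set of minimizers of the tropical signomial $f_{V, \bfw}$ over $\HH$. Since $\bfw$ is a weight vector, $f_{V, \bfw}$ is homogeneous of degree $\sum_i w_i = 1$, so Lemma~\ref{lem:barycenter} applies and identifies this minimizer set with the cell of $\overline{\NC}(f_{V, \bfw})$ dual, via the correspondence $\beta$, to the central cell of $\newtsubdiv(f_{V, \bfw})$. By Lemma~\ref{cor:normal-complex-fVw}, $\overline{\NC}(f_{V, \bfw})$ is exactly the covector decomposition $\CD(V)$, so this minimizer set is a covector cell dual to the central cell; this already delivers the structural content of the statement, leaving only existence and boundedness to address.

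Both remaining points follow from locating the barycenter of the Newton polytope. The Newton polytope of each factor $f_{\bfv_i}$ is the standard simplex $\Delta^{n-1} = \conv(\bfe_1, \ldots, \bfe_n)$, and since $\bfw$ is a weight vector the Newton polytope of $f_{V, \bfw}$ is the weighted Minkowski sum $\sum_i w_i \Delta^{n-1} = \left( \sum_i w_i \right) \Delta^{n-1} = \Delta^{n-1}$. The point $\frac1n \1_n$ is the barycenter of $\Delta^{n-1}$ and lies in its relative interior. Taking $\lambda = \frac1n$ in Lemma~\ref{lem:barycenter}, the point $\lambda \1_n = \frac1n \1_n$ lies in the relative interior of exactly one cell of the subdivision $\newtsubdiv(f_{V, \bfw})$, namely the central cell; thus the central cell exists and the minimum is attained.

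For boundedness I would observe that the central cell contains the interior point $\frac1n \1_n$ of $\Delta^{n-1}$ in its relative interior, and therefore cannot be contained in any facet of $\Delta^{n-1}$; hence the central cell is an \emph{interior} cell of the subdivision. By the correspondence between interior cells and bounded cells recalled above, the cell dual to the central cell under $\beta$ is a bounded cell of $\overline{\NC}(f_{V, \bfw})$, so the Fermat-Weber set is a bounded covector cell and is therefore one of the cells comprising $\tconv(V)$. I expect the main obstacle to be exactly this boundedness check: one must argue that the \emph{central} cell is interior rather than lying on the boundary of the Newton polytope, which is what forces the Fermat-Weber set inside the tropical convex hull rather than escaping to an unbounded covector cell — precisely the phenomenon distinguishing the asymmetric metric $d_\Delta$ from the symmetric tropical metric.
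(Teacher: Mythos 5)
Your proposal is correct and takes essentially the same approach as the paper's own proof: both identify $\FW(V, \bfw)$ with the minimizers of $f_{V, \bfw}$ via \Cref{prop:trop-sig}, observe that $\frac{1}{n} \1_n$ lies in the relative interior of the Newton polytope so that the central cell of $\newtsubdiv(f_{V, \bfw})$ exists and is interior, and then combine \Cref{lem:barycenter} with the interior-cell/bounded-cell duality to conclude that the Fermat-Weber set is a bounded covector cell, hence a cell of $\tconv(V)$. The only differences are cosmetic: the paper argues ``backwards'' starting from the Newton polytope while you argue forwards, and you make explicit two points the paper leaves implicit, namely the citation of \Cref{cor:normal-complex-fVw} and the simplification $w_1 \Delta^{n-1} + \cdots + w_m \Delta^{n-1} = \Delta^{n-1}$ for a weight vector.
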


\begin{proof}
    We will work backwards, starting with the Newton polytope.
    The Newton polytope is Minkowski additive over products, so the Newton polytope of $f_{V, \bfw}$ is $w_1 \Delta^{n-1} + \cdots + w_m \Delta^{n-1}$.
    The point $\frac{1}{n} \1_n$ is contained in its relative interior.
    Therefore, there is an interior cell of $\newtsubdiv(f)$, denoted $C^\ast$, containing $\frac{1}{n} \1_n$ in its relative interior.
    Dual to $C^\ast$ is a cell of $\overline{\NC}(f_{V, \bfw})$, denoted $C$.
    The cell $C$ is bounded since $C^\ast$ is an interior cell of $\newtsubdiv(f)$.
    Thus, $C$ is a cell of the min-tropical convex hull of the data.
    According to \Cref{lem:barycenter}, $C$ is the set of minimizers of $f_{V, \bfw}$ over $\HH$, which by \Cref{prop:trop-sig} is $\FW(V, \bfw)$.
\end{proof}

The second half of \Cref{thm:main} says that for any covector cell $C$ of $\tconv(\bfv_1,\ldots,\bfv_m)$, there is some weight vector that makes $C$ the Fermat-Weber set.
To prove this, it is convenient to pass to the Cayley subdivision, and for this we introduce the following notation.
We denote Cayley subdivision dual to $f_{V, \bfw}$ by $\Sigma(V)$, since it does not depend on the weights.
It is the regular subdivision of the product of simplices $\Delta^{m-1} \times \Delta^{n-1}$ induced by lifting $(\bfe_i, \bfe_j)$ to $(\bfe_i, \bfe_j, -v_{ij})$.
The weights are accounted for by taking the appropriate linear section.

The central cell of the subdivision dual to $f_{V, \bfw}$ is the cell dual to the Fermat-Weber set (\Cref{prop:trop-sig}, \Cref{lem:barycenter}).
The Newton polytope of $f_{V, \bfw}$ is the simplex $\Delta^{n-1}$, and thus the central cell is the one containing $\frac{1}{n} \1_n$ in its relative interior. 
The \textit{central Cayley cell} is then defined as the cell of $\Sigma(V)$ which contains $(\bfw, \frac{1}{n} \1_n)$.
\Cref{thm:simplex-weights} proves that for any interior cell $C$ of \textit{any} subdvision of $\Delta^{m-1} \times \Delta^{n-1}$, there exists a weight vector so that $C$ contains $(\bfw, \frac{1}{n} \1_n)$. 

The proof of \Cref{thm:simplex-weights} formulates this containment as a fractional matching problem over a spanning forest of $K_{m,n}$ using the correspondence between subsets of vertices of $\Delta^{m-1} \times \Delta^{n-1}$ and bipartite graphs.
Fractional matchings have been extensively studied in combinatorial optimization; for further details see Schrijver's monograph \cite{schrijver2003combinatorial}.

\begin{prop}
    \label{thm:simplex-weights}
    Given any subdivision of the vertices of $\Delta^{m-1} \times \Delta^{n-1}$ and any interior cell $C$ of the subdivision, there is a choice of positive real weights $w_1, \ldots, w_m$ with $\sum w_i = 1$, so that $C$ contains the point $(w_1, \ldots, w_m, \frac1n \mathbbm{1}_n)$ in its relative interior.
\end{prop}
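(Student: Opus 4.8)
The plan is to translate the geometric containment into a purely combinatorial statement about the bipartite graph $G_C$ associated to $C$ under the simplex--forest correspondence (\Cref{ex:covector-graphs}), and then to settle the resulting feasibility problem by an explicit construction. Recall that the vertices of $C$ are exactly the points $(\bfe_i, \bfe_j)$ with $(i,j) \in E(G_C)$, so a point of $C$ is a convex combination $\sum_{(i,j) \in E(G_C)} \lambda_{ij}(\bfe_i, \bfe_j)$ with $\lambda_{ij} \ge 0$ and $\sum \lambda_{ij} = 1$. Reading off the two coordinate blocks, such a combination equals $(w_1, \dots, w_m, \tfrac1n \mathbbm{1}_n)$ precisely when the edge weights satisfy the left conditions $\sum_{j}\lambda_{ij} = w_i$ for each $i \in [m]$ and the right conditions $\sum_{i}\lambda_{ij} = \tfrac1n$ for each $j \in [n]$. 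Since we are free to choose $\bfw$, the left conditions impose no real constraint: they merely define $w_i$ as the $i$-th left marginal of $\lambda$.

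Next I would pin down what \emph{interior} means in graph terms. The facets of $\Delta^{m-1}\times\Delta^{n-1}$ are obtained by deleting a single left vertex $\bfe_i$ or a single right vertex $\bfe_j$, so $C$ lies in a facet exactly when $G_C$ has some isolated vertex of $K_{m,n}$. Hence $C$ is interior if and only if $G_C$ is spanning, i.e.\ every left vertex and every right vertex has degree at least one. This is precisely the hypothesis I need: it guarantees that each right vertex can receive positive flow and that each left marginal is nonzero.

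The crux is then to produce edge weights $\lambda_{ij} > 0$ on \emph{every} edge of $G_C$ with the prescribed right marginals; by the standard fact that $\relint(\conv S) = \{\sum_{s\in S}\mu_s s : \mu_s > 0,\ \sum_s \mu_s = 1\}$ for a finite set $S$, strict positivity on all vertices of $C$ is exactly what places the point in $\relint(C)$. The observation that makes this elementary is that each edge $(i,j)$ is incident to a unique right vertex, so the right conditions decouple column by column. I would therefore set $\lambda_{ij} := 1/(n\, d_j)$, where $d_j \ge 1$ is the degree of right vertex $j$ in $G_C$; this is strictly positive on every edge and gives right marginal $d_j \cdot 1/(n d_j) = 1/n$ for each $j$. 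Defining $w_i := \sum_{j}\lambda_{ij}$ then yields weights that are positive (each left vertex has an incident edge, by spanning) and sum to $\sum_j 1/n = 1$, and the resulting strictly positive convex combination puts $(w_1,\dots,w_m,\tfrac1n\mathbbm{1}_n)$ in $\relint(C)$.

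I expect the only genuine subtlety to be the relative-interior characterization together with the verification that the marked points of $C$ (the vertices of $\Delta^{m-1}\times\Delta^{n-1}$ lying in $C$) are genuinely the vertices of $C$ --- both are standard, since any subset of the vertices of a polytope consists of vertices of its convex hull, but they must be stated carefully. Viewed abstractly, the construction is a feasibility problem for a fractional matching supported on $G_C$ with uniform demands on the right, which one could also settle by a Hall-type or max-flow argument in the spirit of \cite{schrijver2003combinatorial}; the decoupling observation is what reduces that machinery to the one-line choice above. I therefore regard recognizing the decoupling, rather than any hard combinatorial estimate, as the main step.
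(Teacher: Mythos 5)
Your proposal is correct and follows essentially the same route as the paper: translate interiority of $C$ into the condition that its bipartite graph is spanning, and then exhibit the explicit column-decoupled fractional matching $\lambda_{ij} = 1/(n\,d_j)$ with left marginals defining the $w_i$ --- which is exactly the paper's choice $\lambda(e) = 1/(n \cdot \deg r(e))$. The only structural difference is that the paper first refines the subdivision to a triangulation, so that the relevant cell is a simplex with a spanning forest as its graph and relative-interior membership is immediate from unique barycentric coordinates, whereas you apply the strictly-positive-convex-combination characterization of the relative interior directly to the (possibly non-simplicial) cell $C$; this is a legitimate shortcut, since the weight construction never actually uses the forest structure, only spanningness.
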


\begin{proof}
    Begin by refining the given subdivision to a triangulation, and let $S$ be a cell of that triangulation which is contained in $C$ and which has the same dimension as $C$.
    Then $S$ is an interior cell of the triangulation of $\Delta^{m-1} \times \Delta^{n-1}$, and if the point $(\bfw, \frac{1}{n} \1_n)$ lies in the relative interior of $S$, then it also lies in the relative interior of $C$.

    Let $F$ be the bipartite graph corresponding to the vertices of $S$. 
    By \cite[Lemma 6.2.8(a)]{triangulations-textbook}, $F$ is a forest in $K_{m,n}$; it is also spanning, which can be seen as follows.
    The intersection $S \cap \relint(\Delta^{m-1} \times \Delta^{n-1})$ is non-empty if and only if $F$ is not contained in the bipartite graph corresponding to any facet of $\Delta^{m-1} \times \Delta^{n-1}$.
    The bipartite graph corresponding to a facet of $\Delta^{m-1} \times \Delta^{n-1}$ is a subgraph of $K_{n,m}$ obtained by deleting a single vertex (see \cite[Section 6.2.2]{triangulations-textbook}).
    $S$ is an interior cell of the triangulation, hence not contained in any facet. 
    Thus $F$ spans $K_{n,m}$.

    Now, a point $(\bfp, \bfq) \in \R^m \times \R^n$ lies in $\relint(S)$ if it can be written as a convex combination of the vertices of $S$ with all positive coefficients.
    In terms of the forest $F$, $(\bfp, \bfq)$ lies in $\relint(S)$ if there exist $\lambda(e) > 0$ for each edge $e$ in $F$ such that the sum of edge weights on any left vertex adds up to the corresponding $\bfp$ coordinate, and the sum of edge weights on any right vertex adds up to the corresponding $\bfq$ coordinate. Let $r(e)$ be the node on the right side connected to $e$, and let $\ell(e)$ be the node on the left side connected to $e$. The choice of $\lambda$'s in (\ref{eqn:lambda-defn}) leads to a valid choice of weights $w_1, \ldots, w_m$ (given in (\ref{eqn:w-defn})) so that $S$ contains the weighted barycenter.
\begin{equation}\label{eqn:lambda-defn}
    \lambda(e) := \frac{1}{n \cdot \deg r(e)}.
\end{equation}
\begin{equation}\label{eqn:w-defn}
    w_i := \sum_{e \text{ s.t. } \ell(e) = i} \lambda(e).
\end{equation}
The equations in (\ref{eqn:right-sums}) show that the weights on any right node sum to $\frac1n$ (since $F$ is spanning, every vertex has at least one edge); by definition, the weights on the $i$th left node sum to $w_i$. It follows that $\bfb = (w_1, \ldots, w_m, \frac1n \mathbbm{1})$ lies in the relative interior of $S$.
\begin{equation}\label{eqn:right-sums}
    \sum_{r(e) = j} \frac{1}{n \cdot \deg(j)} = \frac1n \sum_{r(e) = j} \frac{1}{\deg(j)} = \frac1n \deg(j) \frac{1}{\deg(j)} = \frac1n.
\end{equation}

Moreover, $w_1, \ldots, w_m$ is a valid choice of weights for the Fermat-Weber problem. The weights $w_i$ are positive because $F$ is spanning, so the sum in (\ref{eqn:w-defn}) is never empty; the equations in (\ref{eqn:w-sum}) show that the $w_i$ sum to one.
\begin{equation}
    \label{eqn:w-sum}
    \sum_{i=1}^m w_i = \sum_e \lambda(e) = \sum_{j=1}^n \sum_{r(e) = j} \frac{1}{n \cdot \deg(j)} = \sum_{j=1}^n \frac1n \sum_{r(e) = j} \frac{1}{\deg(j)} = \sum_{j=1}^n \frac1n = n \frac{1}{n} = 1. \qedhere
\end{equation}
\end{proof}

We can now complete the proof of \Cref{thm:main}.

\begin{theorem}
    \label{thm:main-2}
    Let $P$ be any cell of the covector decomposition of the tropical polytope $\tconv(\bfv_1,\ldots,\bfv_m)$. There exists a weight vector, $\bfw_P$, so that $\FW(V, \bfw_P) = P$.
\end{theorem}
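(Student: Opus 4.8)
The plan is to transport the given covector cell $P$ through the chain of dualities built up in this section, reducing the existence of a suitable weight vector to \Cref{thm:simplex-weights}. First I would record the dual incarnations of $P$. Since $P$ is a cell of $\tconv(\bfv_1,\ldots,\bfv_m)$, it is a \emph{bounded} covector cell, so under the correspondence $\beta$ it is dual to an interior cell $P^\ast$ of $\newtsubdiv(f_{V,\bfw})$, interior with respect to the Newton polytope $\Delta^{n-1}$. By the Cayley trick (\Cref{thm:cayley-trick}), $P^\ast$ corresponds to a cell $\widetilde P$ of the weight-independent Cayley subdivision $\Sigma(V)$ of $\Delta^{m-1}\times\Delta^{n-1}$; equivalently, $\widetilde P = \conv\{(\bfe_i,\bfe_j) : (i,j)\in F\}$ for the covector forest $F \subseteq K_{m,n}$ associated to $P$.

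The key step is to verify that $\widetilde P$ is an \emph{interior} cell of $\Sigma(V)$, so that \Cref{thm:simplex-weights} applies to it; I expect this to be the main obstacle, since it is the one place where the ``interior'' hypotheses of \Cref{thm:containment} and \Cref{thm:simplex-weights} must be matched across the Cayley trick. I would argue it at the level of the forest $F$. A cell of $\Sigma(V)$ is interior in $\Delta^{m-1}\times\Delta^{n-1}$ exactly when $F$ is a \emph{spanning} forest of $K_{m,n}$, i.e.\ covers every left and every right vertex. Covering the right vertices is equivalent to $P^\ast$ avoiding every facet $\{x_j = 0\}$ of $\Delta^{n-1}$, which is precisely the statement that $P^\ast$ is interior, and so holds because $P$ is bounded. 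Covering the left vertices is automatic: as a cell of the mixed subdivision of the Minkowski sum $w_1\Delta^{n-1}+\cdots+w_m\Delta^{n-1}$, $P^\ast$ is a sum $\sum_i w_i\conv(A_i)$ with every $A_i\neq\emptyset$, so $F$ has an edge at each left vertex. Thus $F$ spans both sides and $\widetilde P$ is interior.

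With $\widetilde P$ known to be interior, I would apply \Cref{thm:simplex-weights} to obtain a weight vector $\bfw_P$ (positive, summing to one) with $(\bfw_P,\frac1n\1_n)\in\relint(\widetilde P)$, and then read this back through the dualities. Slicing $\widetilde P$ by the affine subspace $W(\bfw_P)$ keeps the point in the relative interior, since a point in the relative interior of a polytope remains in the relative interior of any affine slice through it; the projection of this slice to the Minkowski sum $\Delta^{n-1}$ is an affine isomorphism carrying $(\bfw_P,\frac1n\1_n)$ to $\frac1n\1_n$, so $\frac1n\1_n\in\relint(P^\ast)$. Hence $P^\ast$ is the central cell of $\newtsubdiv(f_{V,\bfw_P})$. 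Finally, \Cref{thm:containment} identifies $\FW(V,\bfw_P)$ with the covector cell dual to the central cell of $\newtsubdiv(f_{V,\bfw_P})$, which is exactly $P$. This gives $\FW(V,\bfw_P)=P$ and completes the proof.
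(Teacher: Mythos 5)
Your proposal is correct and takes essentially the same route as the paper's proof: pass to the cell of the Cayley subdivision $\Sigma(V)$ corresponding to $P$, use boundedness of $P$ to conclude that cell is interior, invoke \Cref{thm:simplex-weights} to produce $\bfw_P$ with $(\bfw_P,\frac{1}{n}\1_n)$ in its relative interior, and then read this back through the linear section $W(\bfw_P)$ and the duality to identify $P$ as $\FW(V,\bfw_P)$. The only substantive difference is that you spell out, via the spanning covector graph, why boundedness of $P$ forces the Cayley cell to be interior --- a step the paper asserts without detailed proof --- so your write-up is, if anything, slightly more complete.
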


\begin{proof}
    Let $C$ be the cell corresponding to $P$ in the subdivision of $\Delta^{m-1} \times \Delta^{n-1}$ induced by lifting $(\bfe_i, \bfe_j)$ to $(\bfe_i, \bfe_j, -v_{ij})$.
    Since $P$ is bounded, $C$ is an interior cell, and thus \Cref{thm:simplex-weights} implies there is a weight vector $\bfw_P$ so that $(\bfw_P, \frac{1}{n} \1_n) \in \relint(C)$.
    Now let $\mathrm{M}(C)$ be the cell dual to $P$ in the subdivision dual to $f_{V, \bfw_P}$.
    The cell $\mathrm{M}(C)$ arises via the Cayley trick by projecting $W(\bfw_P) \cap C \subset \R^m \times \R^n$ to the second factor.
    Shifting the linear section has the effect of rescaling the simplices in the Minkowski sum $w_1 \Delta^{n-1} + \cdots + w_m \Delta^{n-1}$, and now $\frac{1}{n} \1_n \in \mathrm{M}(C)$.
    Now by \Cref{lem:barycenter} $P$ is the set of minimizers for $f_{V,\bfw_P}$ over $\HH$.
    Finally, \Cref{prop:trop-sig} implies that $P = \FW(V, \bfw_P)$.
\end{proof}

We conclude the section by remarking that the Fermat-Weber set $\FW(V, \bfw)$ can be explicitly recovered by computing the central cell of the Cayley polytope subdivision, which is the cell containing $(\bfw, \frac{1}{n} \1_n)$ in its relative interior.
Indeed, given the central cell $C$ of the Cayley polytope subdivision we can compute the central cell $\mathrm{M}(C)$ of the subdivision dual to $f$ as the projection of $W(\bfw) \cap C$ onto the second factor. 
It is a (weighted) mixed cell, so we have $\mathrm{M}(C) = w_1 A_1 + \cdots + w_m A_m$, where $C = \conv(\bfe_1 \times A_1 \cup \cdots \cup \bfe_m \times A_m)$. 
Then $\bfx \in \FW(V, \bfw)$ if and only if the terms of $f_{\bfv_i}$ with exponents $\bfa \in A_i$ simultaneously achieve the maximum in $f_{\bfv_i}(\bfx)$, for all $i$.
Note that for generic weights, the Fermat-Weber set is a point (of minimal dimension) since the central Cayley cell is of maximal dimension for generic weights.

\section{Applications to Phylogenetics}
\label{sec:conclusion}

In this section, we outline how weighted tropical Fermat-Weber points and \Cref{thm:main} can be applied to phylogenetics.
Phylogenetic reconstruction is concerned with determining the evolutionary relationships between a given set of \textit{taxa}, e.g.\ species.
These relationships are often encoded in a tree in which the leaves represent present-day species and the internal vertices are their common ancestors.
The length of an edge in the tree is usually a measure of the amount of mutation that has occurred between the species on either end.
A standard reference for mathematical phylogenetics is \cite{semple2003phylogenetics}.

Weighted tree data appears in the phylogenetic inference literature, for example when applying bootstrapping to biological data.
Bootstrapping is a widely used resampling method that allows one to estimate various statistical measures, e.g. confidence, on the sample without information about the population \cite{efronBootstrap,soltisBootstrap}.
A typical bootstrapping algorithm in phylogenetics takes as input a table indexed by $n$ taxa, e.g.\ species, and $k$ characters, e.g.\ DNA sequences, and resamples with replacement along the character axis to produce $r$ new $n \times k$ taxa-character tables \cite{soltisBootstrap,felsensteinBootstrap}.
Using a fixed phylogenetic reconstruction method, a tree is then generated from each sample and a consensus tree is computed to summarize the~results.

In \cite{makarenkov2010weighted}, the authors argue that certain phylogenetic bootstrap weightings make non-parametric bootstrapping more robust to noise in the data.
However, that paper is concerned with studying bootstrap support of branches, which is the proportion of bootstrap trees in which the branch occurs.
As an interesting future direction, they suggest studying how the weighting affects robustness of the consensus of the bootstrap trees.
In this section, we use weighted (asymmetric tropical) Fermat-Weber points to define a consensus method.

\subsection{The Space of Equidistant Trees}

We begin with a brief overview of the relevant background for equidistant trees and ultrametrics.

A \textit{phylogenetic tree on $n$ leaves} is a rooted $\R$-weighted tree with leaves labeled by $[n]$.
We further assume that our phylogenetic trees contain no degree two vertices other than the root, which we denote by $\rho$.
We will refer to the weights of the edges as \textit{lengths} in order to avoid confusing them with the weights that appear on the data in the weighted Fermat-Weber problem.
We require the internal branch lengths to be positive, but the lengths of branches adjacent to leaves may be non-positive.
A phylogenetic tree is \textit{equidistant} if the path from in the tree from the root to any leaf has the same length.

A phylogenetic tree $T$ induces a \textit{tree distance} on $[n]$, which is the map $d_T: [n] \times [n] \to \R$ where $d_T(i,j)$ is the length of the path from leaf $i$ to leaf $j$ in $T$.
We will identify $d_T$ with the vector in $(d_T(i,j))_{ij} \in \R^N$, where $N = \binom{n}{2}$.
With the assumption that $T$ contains no degree two vertices, it is possible to recover $T$ uniquely from $d_T \in \R^{N}$ \cite[Theorem 7.2.8]{semple2003phylogenetics}.
Thus, we call the set
\[ \mathcal{T}^\prime_n := \{ d \in \R^{N} \mid d = d_T \text{ for some phylogenetic tree } T \text{ on } n \text{ leaves} \} \]
the \textit{space of equidistant phylogenetic trees}. Moreover, $\mathcal{T}^\prime_n$ is a polyhedral fan homeomorphic to the Billera-Holmes-Vogtman (BHV) space of trees \cite{bhv, speyer2004tropical}.
As usual, we will identify $\mathcal{T}^\prime_n$ with its image in $\R^N/\R\1_N$.
Tropical scaling by $\lambda\1_N$ has the effect of increasing the lengths of all branches adjacent to leaves by $\lambda/2$.

A \textit{dissimilarity map} is a function $\delta: [n] \times [n] \to \R$ such that $\delta(i,i) = 0$ and $\delta(i,j) = \delta(j,i)$ for all $i,j \in [n]$.
A dissimilarity map is an \textit{ultrametric} if it satisfies the three-point condition:
\[ \text{ max of } d(i, j), d(i, k), d(j, k) \text{ is achieved at least twice, for all } 1 \leq i < j < k \leq n. \]
Again, we identify $\delta: [n] \times [n] \to \R$ with the vector $(\delta(i,j))_{ij} \in \R^N$.
The connection between dissimilarity maps and equidistant tree distances goes as follows: a dissimilarity map $\delta: [n] \times [n] \to \R$ is an ultrametric if and only if it is an equidistant tree distance \cite[Theorem 7.2.5]{semple2003phylogenetics}.
Thus, the three-point condition gives explicit tropical equations cutting out $\mathcal{T}^\prime_n$.
In \cite{ardila2006bergman}, Ardila and Klivans further prove that $\mathcal{T}^\prime_n$ is equal to (as sets) the Bergman fan of the complete graph, which implies it is a max-tropical linear space.

Any max-tropical linear space is max-tropically convex \cite[Proposition 5.2.8]{ITG}; the analogous  result holds in the min-tropical convention.
Since in the previous section we used min-tropical convex hulls, it is natural for us to make the substitution $\mathcal{T}_n := -\mathcal{T}^\prime_n$, which is min-tropically convex.
In particular, \Cref{thm:main} implies that for $V \subseteq \mathcal{T}_n$, each point in $\FW(V, \bfw)$ represents an equidistant tree on $n$ leaves.

\subsection{Weighted Asymmetric Tropical Consensus}

One fundamental problem in phylogenetic reconstruction is that different reconstruction methods may produce different trees for the input same data.
A \textit{consensus tree} is a tree that summarizes the different reconstructed trees.
The goal of computing a consensus tree is not to compute the true evolutionary tree, but to determine which evolutionary relationships or features present in the reconstructed trees are likely to be present in the true tree.

Following \cite{joswig-com-tropical-medians}, we define a consensus method via the weighted tropical Fermat-Weber points.
The following result is the weighted analogue of \cite[Theorem 18]{joswig-com-tropical-medians}.

\begin{theorem}[{cf.\ \cite[Theorem 18]{joswig-com-tropical-medians}}]
    \label{thm:FW-of-trees}
    Given trees $\{ t_1, \ldots, t_m \} = T \subset \mathcal{T}_n$ and positive real weights $w_1, \ldots, w_m$, the set $\FW(T, \bfw)$ is contained in $\mathcal{T}_n$. Moreover, all trees in the relative interior of $\FW(T, \bfw)$ have the same topology, meaning that they are isomorphic as partially-labeled unweighted graphs.
\end{theorem}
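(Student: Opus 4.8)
The plan is to establish the two assertions of \Cref{thm:FW-of-trees} by invoking the main theorem together with the identification of $\mathcal{T}_n$ as a min-tropically convex set. For the containment $\FW(T, \bfw) \subseteq \mathcal{T}_n$, the key observation is that $\mathcal{T}_n$ is min-tropically convex (as recalled just before the statement, since $\mathcal{T}^\prime_n$ is a max-tropical linear space and hence max-tropically convex, so its negation is min-tropically convex). Because $T \subseteq \mathcal{T}_n$, the min-tropical convex hull $\tconv(t_1, \ldots, t_m)$ is contained in $\mathcal{T}_n$. By \Cref{thm:containment}, the weighted Fermat-Weber set $\FW(T, \bfw)$ is a covector cell of $\tconv(t_1, \ldots, t_m)$, hence a subset of $\mathcal{T}_n$. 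Thus every point of $\FW(T, \bfw)$ represents an equidistant tree on $n$ leaves.

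For the second assertion, I would argue that the relative interior of a single covector cell corresponds to trees of a fixed topology. By \Cref{thm:containment}, $\FW(T, \bfw)$ is a single covector cell $C$ of the tropical polytope. The plan is to show that the tree topology is constant on $\relint(C)$. Recall that a point $\delta \in \mathcal{T}_n$ is an ultrametric, and its tree topology is determined by the pattern of which pairwise coordinates are tied in the three-point condition, i.e.\ by which of the tropical hyperplane equations cutting out $\mathcal{T}_n$ are active. The essential point is that the covector cell structure refines this information: the covector of a point records exactly which terms achieve the maximum in each $f_{\bfv_i}$, and moving within the relative interior of a fixed cell $C$ keeps this combinatorial data constant. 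Since the tree topology is a function of precisely this tied-coordinate pattern, it is constant on $\relint(C)$.

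The main obstacle I anticipate is making precise the claim that the covector data determines the tree topology, and conversely that it is locally constant on the relative interior of a covector cell. Concretely, I would need to relate the three-point (ultrametric) ties that encode the topology of an equidistant tree to the covector of a point with respect to the data $T$. This requires carefully tracking how the embedding of $\mathcal{T}_n$ into $\TP$ interacts with the covector decomposition $\CD(T)$, and verifying that points in the same relative-interior cell induce the same ultrametric ties and hence the same unweighted labeled tree. A clean way to handle this is to note that topology changes in equidistant trees occur exactly across the walls of the fan $\mathcal{T}_n$ (equivalently, where a strict inequality in the three-point condition becomes an equality), and that these walls are unions of lower-dimensional covector cells; therefore they cannot meet $\relint(C)$, which is the content of the claim.

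Finally, I would assemble the two parts: containment follows from convexity of $\mathcal{T}_n$ and \Cref{thm:containment}, while constant topology follows from the local constancy of the covector data on $\relint(C)$. The conclusion that trees in $\relint(\FW(T, \bfw))$ are isomorphic as partially-labeled unweighted graphs is then immediate, as sharing the same ultrametric tie pattern is exactly the condition for having the same underlying labeled tree shape.
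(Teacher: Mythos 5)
Your proof of the first claim is correct and is exactly the paper's argument: since $\mathcal{T}_n$ is min-tropically convex and $T \subset \mathcal{T}_n$, the hull $\tconv(t_1,\ldots,t_m)$ lies in $\mathcal{T}_n$, and \Cref{thm:containment} places $\FW(T,\bfw)$ inside that hull.

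The second claim is where you have a genuine gap. The paper does not prove constancy of topology on relative interiors of covector cells; it cites an external result, \cite[Theorem 3.2]{page2020tropical}, which states precisely that for any cell $C$ of $\CD(T)$ with ultrametric data $T$, all trees in $\relint(C)$ share one topology. Your proposed substitute for that citation is circular at its key step: the assertion that the walls of the fan $\mathcal{T}_n$ ``are unions of lower-dimensional covector cells'' is logically equivalent to the statement being proved (topology is constant on relative interiors of covector cells if and only if no wall meets the relative interior of a cell), and you offer no independent argument for it. Nor does it follow formally from the definitions: the facets of $\CD(T)$ lie on hyperplanes of the form $x_e - x_{e'} = t_{ie} - t_{ie'}$ (recording which terms of $f_{t_i}$ tie), whereas the walls of $\mathcal{T}_n$ are loci of the form $x_e = x_{e'}$ for coordinates $e, e'$ belonging to a common triple of leaves. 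These two hyperplane systems differ by the constants $t_{ie} - t_{ie'}$, so a priori neither subdivision refines the other; your sentence ``the tree topology is a function of precisely this tied-coordinate pattern,'' where the pattern in question is the covector, conflates these two distinct pieces of combinatorial data.

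What actually makes the claim true is ultrametricity combined with ordinary convexity, and an argument along the following lines would close the gap. Let $C$ be any (classically) convex subset of $\mathcal{T}_n$, for instance the covector cell $\FW(T,\bfw)$, which lies in $\mathcal{T}_n$ by your first part. For a triple of leaves $i,j,k$, the differences $x_{ij}-x_{ik}$, $x_{ij}-x_{jk}$, $x_{ik}-x_{jk}$ are linear functions of $\bfx$. If $x_{ij}-x_{ik}$ took both signs on $\relint(C)$, then on the open subset where it is positive the three-point condition forces $x_{ij}-x_{jk} \equiv 0$, hence $x_{ij}-x_{jk}$ vanishes identically on $C$ (an affine function vanishing on a nonempty open subset of $\relint(C)$ vanishes on its affine hull); symmetrically $x_{ik}-x_{jk} \equiv 0$ on $C$, whence $x_{ij}-x_{ik} \equiv 0$, a contradiction. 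Thus each such difference is either identically zero on $C$ or of constant strict sign on $\relint(C)$, so the tie pattern of every triple, and hence the tree topology, is constant on $\relint(C)$. Either this argument or the citation used in the paper is needed; as written, your proof of the second assertion does not go through.
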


\begin{proof}
    \Cref{thm:containment} and the tropical convexity of $\mathcal{T}_n$ imply the first claim. The second claim follows from \cite[Theorem 3.2]{page2020tropical}, which says that for any cell $C$ of $\CD(T)$, the trees in the relative interior of $C$ share the same topology.
\end{proof}

Thus, we can define a tropical consensus method using the weighted Fermat-Weber points.

\begin{defn}[{cf.\ \cite[Definition 20]{joswig-com-tropical-medians}}]
    \label{defn:consensus}
    Given trees $\{ t_1, \ldots, t_m \} = T \subset \mathcal{T}_n$ and positive real weights $w_1, \ldots, w_m$, the \textit{weighted asymmetric tropical consensus tree} is the (classical) average of the (classical) vertices of~$\FW(T, \bfw)$.
\end{defn}

It is immediate from \Cref{thm:main} that the consensus method defined above is a \textit{tropically convex consensus method}, which simply requires that the consensus tree lies in the tropical convex hull of the data (see \cite[Section 5.2]{joswig-com-tropical-medians}).
Just like the unweighted tropical consensus method, the weighted tropical consensus method is \textit{regular}, in the following sense:
\begin{enumerate}
    \item The consensus of any number of copies of $t$, with any positive real weights, is $t$;
    \item the consensus does not depend on the ordering of the inputs $(t_i, w_i)$;
    \item permuting the taxa of the input tree results in the same permutation of the taxa in the consensus tree.
\end{enumerate}
The above criteria for weighted data are adapted from the definition of regular in \cite{bryant2017can}.
The proof that the weighted tropical consensus method is regular is the same as for the unweighted case, which is given in \cite[Section 5.2]{joswig-com-tropical-medians}.
It is known that regular consensus methods are not ``extension stable" \cite[Theorem 3]{bryant2017can}.

Whether a consensus method behaves well for a certain feature can be measured by whether it is Pareto or co-Pareto with respect to that feature \cite{bryant}.
A consensus method is \textit{Pareto} with respect to a feature $F$ if whenever $F$ is present in every sample, $F$ is also present in the consensus tree.
A consensus method is \textit{co-Pareto} with respect to $F$ if whenever $F$ is present in the consensus tree, $F$ is present in at least one of the samples. 

One feature of interest is the set of \textit{rooted triples} in a tree \cite[Section 1.3]{bryant}, the definition of which we recall now.
Let $T$ be an equidistant tree on $n$ leaves, and let $i,j,k \in [n]$ be three leaves of $T$.
We say that $T$ contains the rooted triple $ij|k$ if $d_T(i,j) < d_T(i,k) = d_T(j,k)$.
That is, the lowest common ancestor of $i$ and $j$ in $T$ is a proper descendant of the lowest common ancestor of $i$ and $k$.
In \cite{joswig-com-tropical-medians}, Comǎneci and Joswig prove that tropically convex consensus methods behave well with respect to \textit{rooted triples}.

\begin{prop}[{Comǎneci and Joswig \cite[Theorem 22]{joswig-com-tropical-medians}}]
    \label{prop:rooted-triples}
    Any tropically convex consensus method is Pareto and co-Pareto on rooted triples.
\end{prop}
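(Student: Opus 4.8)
The plan is to reduce both the Pareto and co-Pareto properties to elementary monotonicity properties of the coordinatewise minimum, after translating the rooted-triple condition into inequalities on three coordinates. Fix leaves $i,j,k$ and write a point $t \in \mathcal{T}_n$ in the representation coming from $\mathcal{T}_n = -\mathcal{T}'_n$, so that $t = -d_T$ for the tree $T$ it represents. Then $T$ contains $ij|k$, i.e.\ $d_T(i,j) < d_T(i,k) = d_T(j,k)$, if and only if $t_{ij} > t_{ik} = t_{jk}$. Since $t$ is a min-plus ultrametric (the minimum of $t_{ij}, t_{ik}, t_{jk}$ is attained at least twice), this is equivalent to the single condition that $t_{ij}$ be the strict maximum of the three coordinates. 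Dually, $t$ fails to contain $ij|k$ if and only if $t_{ij}$ is not the strict maximum, which by the three-point condition is equivalent to the \emph{conjunction} $t_{ij} \le t_{ik}$ and $t_{ij} \le t_{jk}$. Recording this as a conjunction rather than a disjunction is precisely where the ultrametric structure of tree space is used, and I expect it to be the main point of the argument: for an arbitrary (non-ultrametric) point the failure would only yield a disjunction and the co-Pareto step would break.

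Next I would set up the consensus point. Let $c$ be the output of a tropically convex consensus method on input $t_1, \ldots, t_m$, so $c \in \tconv(t_1, \ldots, t_m)$. By \Cref{defn:tconv}, $c$ is a min-tropical linear combination, meaning there are scalars $\lambda_1, \ldots, \lambda_m \in \R$ with $c_s = \min_{\ell}(\lambda_\ell + (t_\ell)_s)$ in every coordinate $s$. Moreover $c \in \mathcal{T}_n$ by min-tropical convexity of $\mathcal{T}_n$, so $c$ is itself an ultrametric and the characterization of the preceding paragraph applies to $c$ as well.

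For co-Pareto I would argue the contrapositive: if no $t_\ell$ contains $ij|k$, then in particular $(t_\ell)_{ij} \le (t_\ell)_{ik}$ for every $\ell$, so by monotonicity of the minimum under termwise inequality $c_{ij} = \min_\ell(\lambda_\ell + (t_\ell)_{ij}) \le \min_\ell(\lambda_\ell + (t_\ell)_{ik}) = c_{ik}$, whence $c_{ij}$ is not the strict maximum and $c$ does not contain $ij|k$. For Pareto, suppose every $t_\ell$ contains $ij|k$, so $(t_\ell)_{ij} > (t_\ell)_{ik}$ and $(t_\ell)_{ij} > (t_\ell)_{jk}$ for all $\ell$. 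Then each term of the minimum defining $c_{ij}$ satisfies $\lambda_\ell + (t_\ell)_{ij} > \lambda_\ell + (t_\ell)_{ik} \ge c_{ik}$; since the index set is finite, a minimum of finitely many numbers each strictly exceeding $c_{ik}$ is itself strictly greater than $c_{ik}$, giving $c_{ij} > c_{ik}$, and symmetrically $c_{ij} > c_{jk}$. Thus $c_{ij}$ is the strict maximum, so by the three-point condition $c_{ik} = c_{jk}$ and $c$ contains $ij|k$. The only delicate point is preserving strictness in the Pareto direction, which relies on finiteness of the sample set, whereas co-Pareto needs only monotonicity. Equivalently, the whole argument can be phrased by projecting onto the three coordinates indexed by $ij, ik, jk$: a coordinate projection is a min-tropical linear map and hence commutes with tropical convex combinations, reducing the claim to the analysis of the tripod $\mathcal{T}_3$.
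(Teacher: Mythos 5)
Your proof is correct, and there is nothing in the paper to compare it against: this proposition is imported verbatim from Com\u{a}neci and Joswig \cite[Theorem 22]{joswig-com-tropical-medians}, and the paper gives no proof of its own. Your argument is essentially the standard one for this result. The two points you flag as the crux are indeed the crux, and you handle both correctly: (1) with the paper's convention $\mathcal{T}_n = -\mathcal{T}'_n$, the minimum of $t_{ij}, t_{ik}, t_{jk}$ is attained at least twice, and this ultrametric structure is exactly what upgrades ``$t_{ij}$ is not the strict maximum'' to the \emph{conjunction} $t_{ij} \le t_{ik}$ and $t_{ij} \le t_{jk}$, making the co-Pareto direction a pure monotonicity statement about coordinatewise minima; (2) the Pareto direction needs strict inequalities to survive the minimum, which holds because the minimum defining $c_{ij} = \min_\ell\bigl(\lambda_\ell + (t_\ell)_{ij}\bigr)$ is over finitely many terms and hence attained. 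You also correctly invoke tropical convexity of $\mathcal{T}_n$ to conclude that the consensus point $c$ is itself an ultrametric point, so the triple characterization applies to $c$ and forces $c_{ik} = c_{jk}$ in the Pareto step; without this, ``$c_{ij}$ is the strict maximum'' would not by itself yield the rooted triple. The closing remark about projecting to the three coordinates $ij, ik, jk$ and reducing to $\mathcal{T}_3$ is a clean way to package the same computation. No gaps.
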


In particular, we have the following result for the weighted tropical consensus method.

\begin{cor}
    \label{cor:pareto}
    The weighted asymmetric tropical consensus tree is Pareto and co-Pareto on rooted triples.
\end{cor}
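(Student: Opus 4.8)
The plan is to derive the corollary directly from \Cref{prop:rooted-triples}, combined with the observation recorded after \Cref{defn:consensus} that the weighted asymmetric tropical consensus method is a tropically convex consensus method. First I would recall why the latter holds. By \Cref{thm:main}---specifically the containment half, \Cref{thm:containment}---the weighted Fermat-Weber set $\FW(T, \bfw)$ is a covector cell of $\tconv(T)$, and in particular $\FW(T, \bfw) \subseteq \tconv(T)$. Since the consensus tree is the classical average of the classical vertices of $\FW(T, \bfw)$, it is a convex combination of points of $\FW(T, \bfw)$, hence lies in $\FW(T, \bfw) \subseteq \tconv(T)$. This is precisely the defining property of a tropically convex consensus method: the consensus output lies in the tropical convex hull of the data.

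Having established that the weighted method is tropically convex, I would simply invoke \Cref{prop:rooted-triples}, which asserts that \emph{any} tropically convex consensus method is Pareto and co-Pareto on rooted triples. The one point meriting a moment's care is that \Cref{prop:rooted-triples} is phrased for tropically convex consensus methods in the abstract, and one should confirm it applies in the weighted regime. This is immediate: the hypothesis of \Cref{prop:rooted-triples} refers only to the consensus output lying in $\tconv(T)$, a condition that is insensitive to how the weights $\bfw$ enter the construction, so the argument of Com\u{a}neci and Joswig transfers without modification.

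I do not anticipate a genuine obstacle here, as the corollary is a direct specialization rather than a new argument. If forced to name the most delicate step, it is verifying that the classical average of the vertices of $\FW(T, \bfw)$ remains inside $\tconv(T)$; but this rests entirely on the containment guaranteed by \Cref{thm:main}, and once that containment is in hand the Pareto and co-Pareto conclusions follow verbatim from \Cref{prop:rooted-triples}.
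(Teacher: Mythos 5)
Your proposal is correct and matches the paper's (implicit) argument exactly: the paper derives \Cref{cor:pareto} by noting that the weighted consensus method is tropically convex---which follows from \Cref{thm:main} since the consensus tree is the average of the vertices of the covector cell $\FW(T,\bfw)$, a classically convex set contained in $\tconv(T)$---and then applying \Cref{prop:rooted-triples}. Your additional check that the weights play no role in the hypothesis of \Cref{prop:rooted-triples} is exactly the right observation and is consistent with how the paper treats this as an immediate corollary.
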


\section{Outlook}
\label{sec:outlook}

In this paper we proved that weighted (asymmetric tropical) Fermat-Weber points satisfy the same tropical convexity properties as their unweighted counterparts, thus generalizing a result of Comǎneci and Joswig \cite[Theorem 4]{joswig-com-tropical-medians}.
Moreover, we defined a weighted analogue of the tropical consensus method defined in \cite{joswig-com-tropical-medians} and showed that it extends the phylogenetics properties of the unweighted version. 
One possible direction for future research is to determine whether weighting bootstrapped trees improves the robustness of asymmetric tropical consensus methods.

In another direction, Comǎneci and Joswig connected the unweighted asymmetric tropical Fermat-Weber problem to a well-known optimization problem called the transportation problem \cite{joswig-com-tropical-medians}.
This allows them to provide a computationally efficient algorithm for computing the Fermat-Weber set \cite[Corollary 17]{joswig-com-tropical-medians}.
In phylogenetics, efficiency is crucial due to the size of the data sets involved, and because the ambient dimension of tree space grows quadratically in the number of taxa.
Thus, another possible direction for future research is to establish a connection between weighted asymmetric tropical Fermat-Weber points and transportation problems.

\subsection*{Acknowledgments}

We are grateful to Elizabeth Gross, Michael Joswig, David Speyer, and Ruriko Yoshida for helpful conversations, and to the anonymous referees and associate editor whose feedback greatly improved the paper. Many thanks also to Ikenna Nometa, John Sabol, and Jane Coons who read earlier versions of the manuscript.

This work was started at the ``Algebra of phylogenetic networks" workshop held at the University of Hawai`i at Mānoa from May 23 - 27, 2022 which was supported by the National Science Foundation under grant DMS-1945584. The first author was supported by National Science Foundation Graduate Research Fellowship under Grant No. DGE-1841052, and by the National Science Foundation under Grant No. 1855135.

\bibliographystyle{plain}
\bibliography{ref}

\end{document}